\newtheorem{theorem}{Theorem}[section]
\newtheorem{corollary}[theorem]{Corollary}
\newtheorem{proposition}[theorem]{Proposition}
\theoremstyle{definition}
\newtheorem{remark}[theorem]{Remark}
\numberwithin{equation}{section}
\newcommand{\A}{\mathbf A}
\newcommand{\B}{\mathbf B}
\newcommand{\calD}{\mathcal D}
\newcommand{\cB}{\mathcal B}
\newcommand{\cA}{\mathcal A}
\newcommand{\cL}{\mathcal L}
\newcommand{\cP}{\mathcal P}
\newcommand{\e}{\mathrm e}
\newcommand{\E}[1][]{\mathbb E^{#1}}
\newcommand{\F}{\mathcal F}
\newcommand{\G}{\mathcal G}
\renewcommand{\H}{\mathbf H}
\newcommand{\cH}{\mathcal H}
\newcommand{\N}{\mathbb N}
\renewcommand{\P}[1][]{\mathbb P^{#1}}
\newcommand{\R}{\mathbb R}
\newcommand{\X}{\mathbf X}
\newcommand{\cX}{\mathcal X}
\newcommand{\bfa}{\mathbf a}
\newcommand{\V}{\mathbf V}
\newcommand{\Z}{\mathbf Z}
\newcommand{\bfj}{\mathbf j}
\newcommand{\bl}{\mathbf 1}
\newcommand{\eps}{\varepsilon}
\newcommand{\wh}{\widehat}
\newcommand{\wt}{\widetilde}
\newcommand{\set}[1]{\left\{#1\right\}}
\numberwithin{equation}{section}
\numberwithin{theorem}{section}
\begin{document}

\title{The distribution of the spine of a Fleming-Viot type process}

\author{
{\bf Mariusz Bieniek} \ and \  {\bf Krzysztof Burdzy} }
\address{MB: Intitute of Mathematics, University of Maria Curie Sk\l odowska, 20-031
Lublin, Poland}
\address{KB: Department of Mathematics, Box 354350,
University of Washington, Seattle, WA 98195, USA}

\email{mariusz.bieniek@umcs.lublin.pl}
\email{burdzy@math.washington.edu}

\thanks{Research supported in part by NSF Grant DMS-1206276.}

\begin{abstract}
  We show uniqueness of the spine of a Fleming-Viot particle system under minimal assumptions on the driving process. If the driving process is a continuous time Markov process on a finite space, we show that asymptotically, when the number of particles goes to infinity, the branching rate for the spine is twice that of a generic particle in the system, and every side branch has the distribution of the unconditioned generic branching tree.
\end{abstract}

\keywords{Fleming-Viot particle system, spine}
\subjclass{60G17}

\maketitle

\section{Introduction}

It is well known that, under  suitable assumptions, a branching process can be decomposed into a spine and side branches. 
A detailed review of the relevant literature is presented in \cite[Sect.~2.2]{EK2004}. The ``Evans' immortal particle picture'' was introduced in \cite{Evans}. Another key paper in the area is \cite{LPP}.
Heuristically speaking, the spine has the distribution of the driving process conditioned on non-extinction, the side branches have the distributions of the critical branching process, and the branching rate along the spine is twice the rate along any other trajectory.

 We will prove results for the  Fleming-Viot branching process introduced in \cite{BurdzyMarch00} that have the same intuitive content.
 Our results have to be formulated in a way different from the informal desscription given above for two reasons. The first, rather mundane, reason is that the Fleming-Viot branching process has a different structure from the processes considered in \cite[Sect.~2.2]{EK2004}. A more substantial difference is that for a Fleming-Viot process with a fixed (finite) number of particles, the distribution of the spine does not have an elegant description (as far as we can tell). On the top of that, unlike in the case of superprocesses, the limit of Fleming-Viot processes, when the number of  particles goes to infinity, has not been constructed (and might not exist in any interesting sense). Hence, our results will be asymptotic in nature. We will show that the limit of the spine processes, as the number of particles goes to infinity, has the distribution of the driving process conditioned never to hit the boundary. We will also prove that the rate of branching along the spine converges to twice the rate of a generic particle
and  the distribution of a side branch converges to the distribution of a branching process with the limiting branching rate.

Our main results on the asymptotic spine distribution are limited to Fleming-Viot processes driven by continuous time Markov processes on finite spaces. We conjecture that analogous results hold for all Fleming-Viot processes (perhaps under mild technical assumptions).

The paper is organized as follows. Section \ref{sec:intro} contains basic definitions. It is followed by Section \ref{spine} proving existence of the spine under very weak assumptions, thus significantly strengthening a similar result from \cite{GK}. Section \ref{DHP} shows that a historical process, in the spirit of \cite{DP91}, can be represented as a Fleming-Viot process and satisfies an appropriate limit theorem. Section \ref{asymspine} contains the main theorems on the distribution of the spine, its branching rate, and its side branches. Section \ref{fixedN} shows by example that the  results on the spine distribution must have asymptotic character because they do not necessarily hold for a process with a fixed number of particles.

\section{Basic definitions}\label{sec:intro}

Our main theorems will be concerned with Fleming-Viot processes driven by Markov processes on finite state spaces. Nevertheless we need to consider Fleming-Viot processes with an abstract underlying state space because our proofs will be based on ``dynamical historical processes'' which are Fleming-Viot processes driven by Markov processes with values in function spaces.  

Let $E$ be a topological space and let $F$ be a Borel proper subset of $E$. We will write
$F^c= E\setminus F$. 
Let $Y_t$, $t\geq 0$, be a continuous time strong Markov process with state space $E$
whose almost all sample paths are right continuous.  For $s\geq 0$, let
\begin{equation*}
  \tau_{F,s}=\inf\set{t>s:Y_t\in F^c},
\end{equation*}
and assume that $\tau_{F,s}$ is a stopping time with respect to the natural filtration
of $Y$ for all $s\geq 0$. 
We assume that $F^c$ is absorbing,
i.e., $Y_t = Y_{\tau_{F,s}}$ for all $t\geq \tau_{F,s}$, a.s.

  In most  papers on the Fleming-Viot process, either $Y$ is a diffusion in an open subset $F\subset\R^d$ or $Y$
  is a continuous time Markov process and $E$ is a countable set, so $\tau_F$ is a stopping
  time in those cases.  We recall here that the hitting time of a Borel subset of a topological space by a
  progressively measurable process is a stopping time (see, e.g., Bass \cite{Bass10}).

We will use $\theta$ to denote the usual shift operator but in this section and Section \ref{spine} we do not assume that the transition probabilities of $Y$ are time homogeneous.
We will always make the following assumptions.
\begin{enumerate}[(i)]
\item
$ \P\left( s <\tau_{F,s}<\infty \mid Y_s = x\right)=1 $ for all $x\in F$ and $s\geq 0$.
 \item For every $x\in F$ and $s\geq 0$, the conditional distribution of $\tau_{F,s}$ given $\{Y_s = x\}$ has no atoms. 
\end{enumerate}

Consider an integer $N\geq 2$ and a family $\set{U_k^i,\, 1\leq i\leq N,\, k\geq 1}$ of jointly independent
random variables such that $U_k^i$ has the uniform distribution on the set
$\set{1,\dotsc,N}\setminus\set{i}$. 

We will use induction to construct a Fleming-Viot type process $\X^N_t=(X^1_t,\dotsc,X^N_t)$, $t\geq 0$, with
values in $F^N$. 
Let $\tau_0=0$, suppose that $(X_0^{1,1},\dotsc,X_0^{1,N})\in F^N$, and let 
\begin{equation}\label{o26.1}
  X_t^{1,1},\dotsc,X_t^{1,N},\quad t\geq 0,
\end{equation} 
be independent and have transition probabilities of the process $Y$. We assume that processes in \eqref{o26.1} are independent of the family
$\set{U^i_k,\, 1\leq i\leq N,\, k\geq 1}$. Let 
\begin{equation*}
  \tau_1=\inf\set{t>0:\exists_{1\leq i\leq N}\, X_t^{1,i}\in F^c}.
\end{equation*}
By assumption (ii), no pair of processes can exit $F$ at the same time, so the index $i$ in
the above definition is unique, a.s.

For the induction step, assume that the families 
\begin{equation*}
  X_t^{j,1},\dotsc,X_t^{j,N}, \quad t\geq 0,
\end{equation*}
and the stopping times $\tau_j$ have been defined for $j\leq k$. For each $j\leq k$, denote by $i_j$ the unique
index such that $X_{\tau_j}^{j,i_j}\in F^c$. Let
\begin{equation*}
  X_{\tau_k}^{k+1,m}=X_{\tau_k}^{k,m}\quad \text{for $m\neq i_k$,}
\end{equation*}
and
\begin{equation*}
  X_{\tau_k}^{k+1,i_k}=X_{\tau_k}^{k,U^{i_k}_k}.
\end{equation*}
Let the conditional joint distribution of
\begin{equation*}
  X_t^{k+1,1},\dotsc,X_t^{k+1,N},\quad t\geq \tau_k,
\end{equation*}
given
$\set{X_t^{k+1,m},\, 0\leq t \leq \tau_k, 1\leq m\leq N}$ and $\set{U_k^i,\, 1\leq i\leq N,\, k\geq 1}$,
be that of $N$ independent processes with transition probabilities of $Y$, starting from $X_{\tau_k}^{k+1,m}$, $1\leq m\leq N$.
Let 
\begin{equation*}
  \tau_{k+1}=\inf\set{t>\tau_k:\exists_{1\leq i\leq N}\, X_t^{k+1,i}\in F^c}.
\end{equation*}
We define $\X^N_t:=(X^1_t,\dotsc,X^N_t)$ by
\begin{equation*}
  X_t^m=X_t^{k,m},\qquad \text{  for  } \tau_{k-1}\leq t<\tau_k,\  k\geq 1,\ 
m=1,2,\dotsc,N.
\end{equation*}
Note that the process $\X^N$ is well defined only up to the time
\begin{equation*}
  \tau_\infty :=\lim_{k\to\infty} \tau_k
\end{equation*}
which will be called the lifetime of $\X^N$. We do not assume that
$\tau_{\infty}=\infty$, a.s. 

We will suppress the dependence on $N$ in some of our notation.

\subsection{Dynamical historical processes}\label{ssec:DHPs}

The concept of a dynamical historical process (DHP)  was introduced in \cite[p.~355]{GK}
under a different name. We chose the name ``dynamical historical process'' because the concept of DHP
is based on an intuitive idea similar to the  ``historical process'' (see \cite{DP91}).
Heuristically speaking,
for each $n\in\set{1,\dotsc,N}$, $\{H^n_t(s), 0\leq s\leq t\}$ represents
the unique path in the branching structure of the Fleming-Viot process which goes from $X^n_t$ to one of the points $X^1_0, \dots, X^N_0$ along the trajectories of $X^1, \dots, X^N$ and does not jump at times $\tau_k$. Note that the process $Y$ may have jumps so a dynamical historical process $\{H^n_t(s), 0\leq s\leq t\}$ is not necessarily continuous.

Let $\cA$ be the family of  all sequences of the form $((a_1,b_1), (a_2,b_2), \dots, (a_k,b_k))$, where $a_i \in \{1,\dots, N\}$ and $b_i \in \N$ for all $i$.
For a sequence $\alpha=((a_1,b_1), (a_2,b_2), \dots, (a_k,b_k))$ we will write $\alpha + (m,n)$ to denote
$((a_1,b_1), (a_2,b_2), \dots, (a_k,b_k), (m,n))$. 
We will define a function
$\cL :  \{1,\dots, N\} \times [0,\tau_\infty) \to \cA$.
We interpret $\cL(i,s)$ as a label of $X^i_s$ so, by abuse of notation, we will write $\cL(X^i_s)$ instead of $\cL(i,s)$.
We let $\cL(X^i_s)=((i,0))$ for all $0\leq s < \tau_1$ and $1\leq i\leq N$. If $\cL(X^i_s)=\alpha$ for $\tau_{k-1} \leq s < \tau_k$,
$i \ne i_k$ and $i \ne U^{i_k}_k $ then we let $\cL(X^i_s)=\alpha$ for $\tau_{k} \leq s < \tau_{k+1}$. Suppose that $i = U^{i_k}_k $ and  $\cL(X^i_s)=\alpha$ for $\tau_{k-1} \leq s < \tau_k$. Then we let
$\cL(X^i_s)=\alpha+(i, k)$ and $\cL(X^{i_k}_s)=\alpha+(i_k,k)$ for $\tau_{k} \leq s < \tau_{k+1}$.

Later in the paper we will consider a branching process whose 
individuals are 
elements of the set $\cL(\{1,\dots, N\} \times [0,\tau_\infty))$. A 
sequence 
$\alpha_2$ will be considered an offspring of $\alpha_1$ if  
$\alpha_2 = 
\alpha_1+(m,n)$ for some $m$ and $n$.


Suppose that $\cL(X^n_t)=((a_1,b_1), (a_2,b_2), \dots, (a_k,b_k))$ 
for some 
$k\geq 1$. From the definition of $\cL$ we easily infer that 
$0=b_1<b_2<\dotsc<b_k$ and $\tau_{b_k}\leq t$, so that 
$0<\tau_{b_1}<\dotsc<\tau_{b_k}\leq t<\tau_{b_{k+1}}$. For 
$\tau_{b_m}\leq s<\tau_{b_{m+1}}$ with $1\leq m<k$ we define 
$\chi(n,t,s)=a_m$ and $H^n_t(s)=X^{a_m}_s$, and for $\tau_{b_k}\leq 
s\leq t$ we 
define $\chi(n,t,s)=a_k$ and $H^n_t(s)=X^{a_k}_s$.
Note that $H^n_t(s) = X^{\chi(n,t,s)}_s$ and $\chi(n,t,t) = n$ for 
all $1\leq 
n\leq N$ and $0\leq s\leq t$.

We will call $\{H^n_t(s), 0\leq s\leq t\}$ a dynamical historical 
process (DHP) corresponding to $X^n_t$. Note that $H^n_t$ is defined 
only for $1\leq n \leq N$ and $0\leq t < \tau_\infty$.

We will say that a branching event occurred along $H^k_t$ on the interval $[s_1,s_2]$, where $0 \leq s_1 \leq s_2 \leq t$, if  there exist $s\in [s_1,s_2]$ and  $j\ne k$ such that $\chi(j,t,s) = \chi(k,t,s)$ and $\chi(j,t,s_2) \ne \chi(k,t,s_2)$.

\section{Existence and uniqueness of the spine}\label{spine}

The spine process will be defined below the statement of Theorem \ref{thm:uniquespine}. Roughly speaking, the spine is the unique DHP that extends from time 0 to time $\tau_\infty$.
The existence and uniqueness of the spine
was proved in \cite[Thm.~4]{GK}  under very
restrictive assumptions on the driving process $Y$ and under the assumption that
the lifetime $\tau_\infty$ is infinite. We will prove that
the claim holds under  minimal reasonable assumptions, that
is, the strong Markov property of the driving process and non-atomic
character of the exit time distributions. 

\begin{theorem}\label{thm:uniquespine}
Fix some $N\geq 2$, suppose that $Y$ satisfies assumptions (i)-(ii) in Section 
\ref{sec:intro} and $\X^N_0 \in F^N$, a.s. Then, a.s.,
there exists a unique infinite sequence $((a_1,b_1), (a_2,b_2), \dots)$ such that its every finite initial subsequence is equal to $\cL(X^i_s)$ for 
some $1\leq i \leq N$ and $s\geq 0$. 

\end{theorem}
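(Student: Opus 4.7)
The plan is to split the theorem into existence of an infinite ray in the label tree $T := \cL(\{1,\ldots,N\}\times[0,\tau_\infty))$ and then its uniqueness. Existence would come from König's lemma. The tree $T$ is locally finite: the root $\emptyset$ has $N$ children $((i,0))$, and any other node $\alpha$ has at most two children, since a label $\alpha$ is extended only at the unique $\tau_k$ at which its carrier acts as source of a branching, yielding exactly the two siblings $\alpha+(U^{i_k}_k,k)$ and $\alpha+(i_k,k)$. Assumption (i) forces the branching times $\tau_k$ to form an infinite sequence a.s., each contributing two new nodes, so $T$ is a.s.\ infinite and hence has an infinite ray.

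For uniqueness my plan is to track the most recent common ancestor (MRCA) $\gamma(t)$ of the $N$ active labels $\cL(X^1_t),\ldots,\cL(X^N_t)$, defined as the deepest common prefix. Two properties should be verified. First, the depth of $\gamma(t)$ is non-decreasing in $t$, because at each $\tau_k$ only the source and victim change labels, and both new labels extend the previous label of the source, which itself already extended $\gamma(t^-)$. Second, for every infinite ray $\sigma=(\alpha_1,\alpha_2,\ldots)$ in $T$, the MRCA $\gamma(t)$ must lie on $\sigma$, i.e., $\gamma(t)\in\{\emptyset,\alpha_1,\alpha_2,\ldots\}$ for all $t$. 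The reason is that the subtree rooted at each $\alpha_k$ is necessarily infinite (otherwise $\sigma$ would terminate), and an infinite subtree forces the number of particles whose label extends $\alpha_k$ to remain at least $1$ for all $t$ after $\alpha_k$ is created; this rules out $\gamma(t)$ drifting into a subtree off $\sigma$.

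Granted these two properties, uniqueness reduces to showing that the depth of $\gamma(t)$ tends to infinity a.s.\ as $t\to\tau_\infty$, since then the MRCA path itself is an infinite ray in $T$ that, by the second property, must equal every infinite ray. To prove the depth tends to infinity I would argue inductively. If the current MRCA is a node $\gamma$, then the $N$ particles are distributed among the subtrees of $\gamma$'s children with sizes summing to $N$ (two classes if $\gamma$ is internal, $N$ classes if $\gamma$ is the root), and the MRCA extends past $\gamma$ exactly when one class absorbs all $N$ particles. At each subsequent $\tau_k$, conditional on the victim $i_k$, the source $U^{i_k}_k$ is uniform on the other $N-1$ particles, so the class counts perform a Moran-type walk whose probability of changing is bounded below by $1/(N-1)$ whenever the configuration is not yet degenerate. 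A classical Moran coalescence argument should then give absorption in finite time a.s.

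The main obstacle will be this last step: establishing the almost-sure absorption despite the fact that the victim $i_k$ is not uniformly chosen but determined by the driving process $Y$. The saving grace is the independent and conditionally uniform selection of the source, which supplies the randomness needed to decouple the coalescence from the dynamics of $Y$ and to drive the class-count walk to its boundary.
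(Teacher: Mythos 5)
The existence argument via K\"onig's lemma and the two MRCA bookkeeping properties are fine, and they correctly reduce the whole theorem to a coalescence statement: the depth of $\gamma(t)$ must tend to infinity a.s. But the Moran-type walk you propose for this last step does \emph{not} give absorption when the victim $i_k$ is chosen adversarially, and nothing in assumptions (i)--(ii) prevents the driving process $Y$ from behaving adversarially in this respect. Concretely, track the number $n_1$ of live labels in one of the two subtrees of the current MRCA, so $n_1+n_2=N$. If the victim is in class $a$, the walk moves to the neighbouring state towards the boundary on the side opposite $a$ with probability $n_b/(N-1)$ and otherwise stays put; crucially, the victim's class determines the \emph{only} possible direction of motion at that step. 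An adversary that always names the victim on whichever side is larger (pushing the walk towards the centre) keeps $n_1$ bounded away from $\{0,N\}$ forever: e.g.\ for $N=4$, from $n_1=1$ it can only go to $2$ or stay, from $n_1=3$ only to $2$ or stay, so $\{0,4\}$ is never reached. Thus ``uniform source, arbitrary victim'' is not enough; the heuristic you offer as the ``saving grace'' is exactly the unproven step, and as stated it is false.

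The paper avoids this entirely. Rather than analysing the class-count walk, it directly exhibits a coalescence event of probability bounded below by a constant $q=q(N)>0$ on a time window of length $m_{\bfj}$, where $\bfj$ is the particle with the largest median exit time. The event is that every other particle exits before $m_{\bfj}$ and, each time one does, it jumps onto $\bfj$ and then survives to time $m_{\bfj}$; this is assembled by a chain of conditionings using the elementary inequality \eqref{o21.1} together with the fact that particles are conditionally i.i.d.\ given the past. On this event all DHPs at time $m_\bfj$ agree on an initial segment. Step 2 then iterates independent copies of this trial via the strong Markov property at the stopping times $\sigma_k$, yielding that the MRCA depth passes every $\tau_m$ a.s.\ --- the same conclusion you need, but obtained by building one explicit good event, not by a random-walk absorption argument. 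To salvage your plan you would need to replace the Moran step with something of this flavour that exploits the particular structure of the exit times (e.g.\ the median device) rather than treating the victim as arbitrary.
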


In the notation of the theorem, we define the spine of $\X^N$ by 
$J(s)=J^N(s) = X^{a_m}_s$ for $\tau_{b_m}\leq s<\tau_{b_{m+1}}$, 
$m\geq 1$. We also let $\cL(J,s) = \cL(X^{a_m}_s)$ and $\chi(J,s) = 
a_m$ for $\tau_{b_m}\leq s<\tau_{b_{m+1}}$, $m\geq 1$.

\begin{proof}[Proof of Theorem \ref{thm:uniquespine}]
{\it Step 1}.
We start with a  simple estimate.
Consider a probability space $(\Omega, \F, \P)$, a $\sigma$-field $\G \subset \F$ and an event $A\in \F$. Suppose that  $\P(A) \geq p$ and let $V = \P( A \mid \G)$. Then
\begin{align*}
      p&\leq \E V= \E\left(V \bl_{\left\{ V<p/2 \right\}}\right)
     +\E\left(V\bl_{ \left\{ V\geq p/2
      \right\} } \right)
      \leq \frac{p}{2}\P\left( V<\frac{p}{2} \right)
      +1\cdot \P\left( V\geq \frac{p}{2} \right)\\
      &\leq \frac{p}{2}
      + \P\left( V\geq \frac{p}{2} \right).
\end{align*}
This implies that
\begin{align}\label{o21.1}
\P(\P( A \mid \G) \geq p/2) = \P(V \geq p/2) \geq p/2.   
\end{align}

In the rest of the proof, $\P$ will refer to the probability
measure on the probability space where $\X^N$ is defined.
Let 
$\tau_F^i=\inf\set{t>0:X_t^i\in F^c}$
and $ m_i=\mathrm{median}\left( \tau_F^i \right)$ for $1\leq i\leq N$.
Note that since the distribution of each $\tau_F^i$ has no atoms, each $m_i$ is uniquely determined. The median $m_i$ depends only on $X^i_0$. 
Let $\bfj$ be the index of the particle with the maximal median of the
exit time from $F$ (we choose the smallest of such indices if there 
is a tie). In other words,
$\bfj$ is the smallest number satisfying
$ m_{\bfj}=\max_{1\leq i\leq N}m_i$.

Recall the definition of $\tau_k$ from Section \ref{sec:intro}.
Let $i^*$ be a function of $i$ defined by $\tau_F^i = \tau_{i^*}$  
and let
\begin{align*}
A' &=   \bigcap_{i\neq \bfj }\left\{\tau_F^i\leq m_{\bfj},
U_{i^*}^{i} = \bfj \right\}, \\
A'' &= \{\tau^{\bfj}_F > m_{\bfj}\},\\
A & = A' \cap A''.
\end{align*}
The following estimate holds for any $\X^N_0 \in F^N$,
\begin{align*}
\P(A') & = \prod_{i\neq \bfj}\left[\P(\tau_F^i\leq m_{\bfj})
      \P\left(U_{i^*}^{i} = \bfj \right)\right]
	   \geq \frac{1}{2^{N-1}}\left( \frac{1}{N-1}\right)^{N-1} =:p>0.
\end{align*}
The events $A' $ and $A''$ are independent and $\P(A'') = 1/2$ so
$\P(A) \geq p/2 =: p_1$.

Let $\F_t = \sigma\{\X^N_s, s\leq t\}$.
Note that all $\tau^i_F$, $1\leq i\leq N$, are distinct, a.s., because the hitting time distributions have no atoms.
Let $\wh \tau^1 < \wh \tau^2 < \dots < \wh\tau^{N-1}$ be the ordering of the set $\{\tau^i_F, i\ne \bfj\}$. 
Let $k(i)$ be defined by $\wh \tau^i = \tau^{k(i)}_F$.

Since $\P (A ) \geq p_1$, we obtain from \eqref{o21.1},
\begin{align*}
\P( \P (A \mid \F_{\wh\tau^1}) \geq p_1/2) \geq p_1/2.
\end{align*}
Let $B_1 = \{\P (A \mid \F_{\wh\tau^1}) \geq p_1/2\}$
and $C_1 = \{\tau^{k(1)}_F \circ \theta_{\wh \tau^1} > m_{\bfj}-\wh \tau^1\}$.
If $B_1$ holds then 
\begin{align*}
\bl_{\left\{U_{k(1)^*}^{k(1)} = \bfj\right\}}=
\P(U_{k(1)^*}^{k(1)} = \bfj \mid \F_{\wh\tau^1}) \geq
\P (A' \mid \F_{\wh\tau^1}) \geq 
\P (A \mid \F_{\wh\tau^1}) \geq p_1/2 >0.
\end{align*}
So if $B_1$ holds then
$\left\{U_{k(1)^*}^{k(1)} = \bfj\right\}$ holds as well. This and the fact that
the processes 
$\{X^{k(1)}_t, t\geq \wh \tau^1\}$ 
and $\{X^{\bfj}_t, t\geq \wh \tau^1\}$ 
are conditionally i.i.d. given $ \F_{\wh\tau^1}$ imply that on the event $B_1$, 
\begin{align*}
\P (C_1 \cap A \mid \F_{\wh\tau^1}) \geq (p_1/2)^2.
\end{align*}
We have $\P(B_1) \geq p_1/2$, so
\begin{align}\label{o23.1}
\P (C_1 \cap A ) \geq (p_1/2)^3.
\end{align}

Next we will apply induction. 
Let
\begin{align*}
p_n & = (p_{n-1}/2)^3, \qquad  n\geq 2, \\
C_n & = \bigcap_{i=1}^n
\{\tau^{k(i)}_F \circ \theta_{\wh \tau^i} > m_{\bfj}-\wh \tau^i\},
\qquad 2 \leq n \leq N-1,\\
B_n & = \{ \P (C_{n-1} \cap A \mid \F_{\wh\tau^{n}}) \geq p_{n}/2\},
\qquad 2 \leq n \leq N-1.
\end{align*}

Suppose that for some $1\leq n\leq N-2$,
\begin{align*}
\P \left(C_n \cap A\right) \geq p_{n+1}.
\end{align*}
Note that the above inequality holds for $n=1$, by \eqref{o23.1}.
This induction assumption and \eqref{o21.1} imply that,
\begin{align*}
\P(B_{n+1} )=
\P\left(\P \left(C_n \cap A \mid \F_{\wh\tau^{n+1}} \right) \geq p_{n+1}/2 \right) \geq p_{n+1}/2.
\end{align*}
If $B_{n+1}$ holds then 
\begin{align*}
\bl_{\left\{U_{k(n+1)^*}^{k(n+1)} = \bfj\right\}}=
\P(U_{k(n+1)^*}^{k(n+1)} = \bfj \mid \F_{\wh\tau^{n+1}}) \geq
\P (A' \mid \F_{\wh\tau^{n+1}}) \geq 
\P (A \mid \F_{\wh\tau^{n+1}}) \geq p_{n+1}/2 >0.
\end{align*}
Hence if $B_{n+1}$ holds then
$\left\{U_{k(n+1)^*}^{k(n+1)} = \bfj\right\}$ holds as well. This and the fact that
the processes 
$\{X^{k(n+1)}_t, t\geq \wh \tau^{n+1}\}$ 
and $\{X^{\bfj}_t, t\geq \wh \tau^{n+1}\}$ 
are conditionally i.i.d. given $ \F_{\wh\tau^{n+1}}$ imply that on the event $B_{n+1}$, 
\begin{align*}
\P (C_{n+1} \cap A \mid \F_{\wh\tau^{n+1}}) \geq (p_{n+1}/2)^2.
\end{align*}
We have $\P(B_{n+1}) \geq p_{n+1}/2$, so
\begin{align*}
\P (C_{n+1} \cap A ) \geq (p_{n+1}/2)^3.
\end{align*}
This finishes  the induction step. We conclude that
\begin{align}\label{o23.2}
\P (C_{N-1} \cap A ) \geq (p_{N-1}/2)^3 =: q,
\end{align}
where $q>0$ depends only on $N$.

  If $C_{N-1} \cap A$ holds then  all DHP paths
  $H^n_{m_\bfj}$, $1\leq n\leq N$, must pass through
  $X^{\bfj}_{\wh\tau^1}$, so they all agree with $H^{\bfj}_{\wh\tau^1}$ on the time interval
  $[0,\wh\tau^1]$ in the sense that $\chi(n, m_\bfj, s) = \chi(\bfj, 
  m_\bfj, s) $
for all $s\in [0,\wh\tau^1]$ and $1\leq n \leq N$.

\bigskip
{\it Step 2}.
  Let $\sigma_0=0$, $\bfj_1 = \bfj$, $m_{\bfj_1,1} = m_\bfj$ and
 $  \sigma_1=m_{\bfj_1,1}\wedge \tau_F^{\bfj_1}$.

Suppose that $\bfj_n$ and $\sigma_n$ have been defined for $n=1, \dots, k$.
We define 
$\tau_F^{i,k+1}=\inf\set{t>\sigma_k:X_t^i\in F^c}$
and $ m_{i,k+1}$ to be the  median of the conditional distribution of
$ \tau_F^{i,k+1} - \sigma_k $ given $\sigma_k$ for $1\leq i\leq N$.
Let 
$\bfj_{k+1}$ be the smallest number satisfying
$ m_{\bfj_{k+1}, k+1}=\max_{1\leq i\leq N}m_{i,k+1}$.
  Let 
  \begin{equation*}
    \sigma_{k+1}=\left(m_{\bfj_{k+1},k+1} + \sigma_k\right)\wedge \tau_F^{\bfj_{k+1}, k+1}.
  \end{equation*}
It is easy to see that $\sigma_k<\tau_\infty$, a.s., for
  all $k\geq 1$.  
Let $C_{N-1}^k$ and $A^k$ be defined in  the way analogous to  $C_{N-1}$ and $A$ but relative to $\bfj_k$ and $m_{\bfj_{k},k}$.
Let $D_k$ be defined by the condition $\bl_{D_k}\equiv \bl_{C_{N-1}^k \cap A^k} \circ\theta_{\sigma_{k-1}}$ for $k\geq 1$.

  By \eqref{o23.2} and the strong Markov property of $\X^N$ applied at time $\sigma_{k-1}$, for each $k\geq 2$,
  \begin{equation*}
    \P(D_k^c\mid D_1^c,\dotsc,D_{k-1}^c)\leq 1-q.
  \end{equation*}
It follows that for every $k\geq 2$,
  \begin{equation*}
    \begin{split}
      \P\left( \bigcup_{i=1}^k D_i\right)&=1-\P(D_1^c)\P(D_2^c\mid D_1^c)\dotsc
      \P(D_k^c\mid D_{1}^c,\dotsc,D_{k-1}^c)\\
      &\geq 1-(1-q)^k.
    \end{split}
  \end{equation*}
  So if $D_\ast=\bigcup_{k=1}^\infty D_k$, then $\P\left( D_\ast \right)=1$, i.e.,
  almost surely at least one of the events $D_k$ occurs.
For any $m \geq 1$, the same claim applies to the process $\X^N$ after time $\tau_m$,
by the strong Markov property,
 so if $G_m=\set{\bl_{D_\ast}\circ\theta_{\tau_m}=1}$,
  then $\P(G_m)=1$ for all $m\geq 1$, and, therefore,
  \begin{equation}\label{eq:all}
    \P\left( \bigcap_{m=1}^\infty G_m \right)=1.
  \end{equation}
Fix any $m\geq 1$. By \eqref{eq:all}, with probability 1, there exists $k$ such that $\bl_{D_k}\circ\theta_{\tau_m}=1$ and we let $k$ denote the smallest integer with this property.
Let
$\eta^{(m)}_k=\sigma_{k}\circ\theta_{\tau_{m}}+\tau_m$, $ k\geq 1$.
The last remark in Step 1 implies that all
 DHP paths
  $H^n_{\eta^{(m)}_k}$, $1\leq n\leq N$, must agree on the time interval
  $[0,\tau_m]$, a.s., that is,
$\chi(j, \eta^{(m)}_k, s) = \chi(1, \eta^{(m)}_k, s) $
for all $s\in [0,\tau_m]$ and $1\leq j \leq N$.
For any
  $t<\tau_\infty$ we find a random $m$ such that $t\leq \tau_m$
and $k$ such that $\bl_{D_k}\circ\theta_{\tau_m}=1$.
Suppose that $\cL\left(X^1_{\eta^{(m)}_k}\right)=((c_1,d_1),  \dots, (c_n,d_n))$.
It is 
easy to check that if we let $a_m = c_m$ and $b_m = d_m$ for $1\leq m \leq n$ such that $\tau_{d_{m+1}}< t$ then this definition is consistent when we vary $t$ over $[0, \tau_\infty)$. We have
defined a unique sequence
$((a_1,b_1), (a_2,b_2), \dots)$ satisfying the theorem.
\end{proof}

\section{Dynamical historical process as  a Fleming-Viot process}\label{DHP}

We will write $\{Y^t, 0\leq s \leq t\}$ to denote the process $Y$ conditioned by $\tau_{F,0} > t$.
In this section, we prove that, as the number of particles $N$ goes to infinity, the empirical distribution of DHPs at time $t$ converges to the distribution of the trajectory of the process
$Y$ conditioned by $\tau_{F,0} > t$. For technical reasons
we impose two extra assumptions on $\X$; they will stay in force for the rest of the paper. We assume that $\tau_\infty= \infty$, a.s., for all $N$, and that the process $Y$ is time homogeneous. 
If the driving process is Brownian motion in a Lipschitz domain with 
the Lipschitz constant less than 1 (\cite{BBF, GK}) or Brownian 
motion in a polytope with $N=2$ (\cite{BBF}), then $\tau_\infty= 
\infty$, a.s. However, it was proved in \cite{BBP} that $\tau_\infty 
< \infty$, a.s., for every $N$, for some Fleming-Viot processes 
driven by one-dimensional diffusions. 
Crucially for the rest of our paper, it is easy to see that if the driving process is a continuous time Markov process on a finite space then $\tau_\infty= \infty$, a.s.

Let $D([0,t],E)$ denote the usual Skorokhod space of cadlag functions with values in $E$. Let
$\H^N_t=(H_t^1,\dotsc,H_t^N)$, where $H_t^k$ is DHP of $X_t^k$. Let $\cX^N_t$ and
$\cH^N_t$ denote empirical distribution of $\X^N_t$ and $\H^N_t$, resp., i.e., 
\begin{align*}
 \cX^N_t(A)&=\frac{1}{N}\sum_{k=1}^N \delta_{X^k_t}(A),\qquad
\text{  for  } A\subset E,\\
  \cH^N_t(A)&=\frac{1}{N}\sum_{k=1}^N \delta_{H^k_t}(A),
\qquad \text{  for  } A \subset D([0,t],E).
\end{align*}
Let $\P[\cX]$ and $\E[\cX]$ denote the probability distribution and the corresponding
expectation for the process $\X^N$, assuming that the empirical distribution
of $\X^N_0$ is $\cX$.

\begin{theorem}\label{thm:convergence}
  Assume that $\cX^N_0\Rightarrow \cX$ as $N\to\infty$ for some probability measure
 $\cX$ on $F$ and $\tau_\infty= \infty$, a.s., for all $N$.
Then for every fixed $t\geq 0$ and continuous bounded function $f$ on $D([0,t],E)$, when $N\to \infty$, in probability,
  \begin{equation*}
    \cH_t^N(f) \to
\E[\cX](f(\{Y^t_s, 0\leq s \leq t\}) ).
  \end{equation*}
\end{theorem}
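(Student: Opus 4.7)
The plan is to interpret the DHP system $\H^N_t=(H^1_t,\dotsc,H^N_t)$ as itself a Fleming-Viot particle system, whose driving process is the path-valued Markov process $\wt Y$ given by $\wt Y_t=\{Y_s:0\leq s\leq t\}$, taking values in the Skorokhod space of cadlag $E$-valued paths. The natural absorbing set $\wt F^c$ consists of paths whose endpoint lies in $F^c$; since the first exit of $\wt Y$ from $\wt F$ coincides with the first exit $\tau_{F,0}$ of $Y$ from $F$, assumptions (i) and (ii) transfer verbatim from $Y$ to $\wt Y$.

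The first step is to verify this identification by inspection of the DHP construction. At each time $\tau_k$, the DHP $H^{i_k}$ has just hit $\wt F^c$ (its endpoint is $X^{i_k}_{\tau_k}\in F^c$) and is replaced by a copy of $H^{U^{i_k}_k}\in\wt F$; between branching times, each DHP extends according to the trajectory of the underlying $X^m$, whose marginal law is that of $Y$. This is precisely the Fleming-Viot update rule with $N$ particles for $\wt Y$ on $\wt F$, so $\cH^N_t$ is the empirical measure of a genuine Fleming-Viot system.

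The second step is to apply a convergence theorem for the empirical measure of a Fleming-Viot system at a fixed time: as $N\to\infty$, this empirical measure converges in probability to the law of the driving process at time $t$ conditioned on non-absorption by time $t$. Applied to $\wt Y$ started from $\cX$, this yields convergence of $\cH^N_t$ to the conditional law of $\{Y_s,\,0\leq s\leq t\}$ given $\tau_{F,0}>t$, which is exactly the distribution of $\{Y^t_s,\,0\leq s\leq t\}$ featured in the theorem.

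The main obstacle is this second step: the driving process $\wt Y$ lives on a non-standard, time-dependent state space, so existing Fleming-Viot convergence theorems may not apply as a black box. A direct route avoiding this issue is a first- and second-moment calculation using exchangeability. By exchangeability, $\E\cH^N_t(f)=\E f(H^1_t)$ and $\var\cH^N_t(f)=\frac{1}{N}\var f(H^1_t)+\frac{N-1}{N}\bigl(\E(f(H^1_t)f(H^2_t))-(\E f(H^1_t))^2\bigr)$. Over the $O(N)$ branching events on $[0,t]$, a given pair of DHPs shares a common ancestor with probability $O(t/N)$, so $H^1_t$ and $H^2_t$ become asymptotically independent and the variance vanishes. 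Identification of the limiting marginal law of $H^1_t$ can then be obtained via a tagged-particle martingale argument, completing the proof.
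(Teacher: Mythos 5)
Your structural insight—that the DHP system is itself a Fleming--Viot particle system driven by a path-valued process, so that a general empirical-measure convergence theorem can be invoked—is exactly the paper's approach. You also correctly identify the obstacle: the naive path-valued process $\wt Y_t=\{Y_s:0\le s\le t\}$ is time-inhomogeneous (the state at time $t$ has ``duration'' $t$, which cannot be recovered from the constantly-extended path alone), so the Villemonais-type result you would like to cite does not apply off the shelf.

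The gap is that you stop at the obstacle rather than resolving it. The paper's fix is a simple but essential space--time augmentation: it sets $Z_t=(t,Y^{(t)})$, where $Y^{(t)}$ is the path of $Y$ stopped at $t$ and extended constantly, so that $Z$ is a genuine time-homogeneous Markov process on the fixed state space $\R_+\times D([0,\infty),E)$. With the absorbing set $F_Z^c$ taken to be pairs whose path component hits $F^c$, the DHP system (suitably rewritten as $\hat H^k_t(s)=(t,H^k_t(s))$) becomes a bona fide Fleming--Viot system driven by $Z$, and \cite[Thm.~1]{Villemonais13} applies verbatim. Without this coordinate, the first step of your plan cannot be executed.

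Your proposed fallback—a direct first-/second-moment computation via exchangeability plus a ``tagged-particle martingale argument''—is only a sketch of a much longer program. The variance bound you outline (coalescence probability $O(t/N)$ for a pair of genealogies) is plausible and would give concentration of $\cH_t^N(f)$ around its mean, but the hard part is identifying the limiting mean $\E f(H^1_t)$ as $\E[\cX]f(\{Y^t_s,\,0\le s\le t\})$; the marginal law of a single particle in the $N$-particle system is not equal to the conditioned law for finite $N$, and establishing the convergence is precisely what a Villemonais-type argument does. As written, that step is asserted rather than proved, so the proposal does not yet constitute a complete proof by either route.
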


\begin{proof}
The theorem follows rather easily from a result  of Villemonais \cite{Villemonais13} but we have to reformulate the problem to be able to apply that theorem in our setting. Specifically, we have to represent DHP as a time-homogeneous Markov process.

 For $y\in D([0,t],E)$ and $t\geq 0$ let
\begin{equation*}
  y^{(t)}(s)=
  \begin{cases}
    y(s), & \text{for $s<t$,}\\
    y(t), & \text{for $s\geq t$,}
  \end{cases}
\end{equation*}
and note that $y^{(t)} \in D([0,\infty),E)$.
Let
$Z_t=\left( t,Y^{(t)} \right)$.
Note that the process $Z$ is a time-homogeneous Markov process with trajectories in the space
$D_* := D([0,\infty), \R_+ \times D([0,\infty), E))$.
Let 
\begin{align*}
F_Z = \{  (t, (v_t(s))_{s\geq 0})_{t\geq 0} 
\in D_*: v_t(s) \in F \text {  for all  } s,t \geq 0\}.
\end{align*}
The set $F_Z^c$ is absorbing for $Z$ in the sense that if $Z_s\in F_Z^c$ and $s< t$ then $Z_t \in F_Z^c$; but it is not true that $Z_t = Z_s$.

Let $\hat\H^N_t=(\hat H_t^1,\dotsc,\hat
H_t^N)$, where $\hat H_t^k(s)=(t,H_t^k(s))$ for $s\leq t$
and $\hat H_t^k(s)=(t,H_t^k(t))$ for $s> t$. Then
$\hat \H^N_t$ is a Fleming--Viot process in $\R_+ \times D([0,\infty), E)$ based on $Z$. A ``particle'' in this process jumps to the location of another particle when it hits $F_Z^c$. For $t\geq 0$ we define the empirical distribution $\hat\cH^N_t$ of $\hat\H^N_t$ as
\begin{align*}
  \hat\cH^N_t(A)&=\frac{1}{N}\sum_{k=1}^N \delta_{\hat H^k_t}(
  A),\qquad \text{  for  } A\subset \R_+ \times D([0,\infty), E).
\end{align*}
Note that for $A\subset  D([0,\infty), E)$ we have $\hat\cH^N_t(\R_+\times A) =
\cH^N_t(A)$.

By \cite[Thm.~1]{Villemonais13}, for every fixed $t\geq 0$ and continuous bounded function $g$ on $D_*$, when $N\to \infty$, in probability,
  \begin{equation*}
    \hat\cH_t^N(g) \to
\E(g(Z_t) \mid Z_s\in F_Z,  0\leq s \leq t).
  \end{equation*}
This is essentially the assertion of the theorem, cloaked in a different
formal statement.
\end{proof}

\begin{remark}\label{n13.1}
For later reference we state \cite[Thm.~1]{Villemonais13}. This claim may be also considered a corollary
of Theorem \ref{thm:convergence}.
  Assume that $\cX^N_0\Rightarrow \cX$ as $N\to\infty$ for some probability measure
 $\cX$ on $F$.  Then for every fixed $t\geq 0$ and continuous bounded function $f$ on $E$, when $N\to \infty$, in probability,
  \begin{equation*}
    \cX_t^N(f) \to
\E[\cX](f(Y^t_t) ).
  \end{equation*}
\end{remark}

\section{The asymptotic distribution of the spine}\label{asymspine}

For the remaining part of the paper we  assume that $Y$ is a time-homogeneous continuous-time Markov chain with
finite state space $E=\set{0,1,\dotsc,n}$.
We choose  $\set{1,\dotsc,n}$ to play the role of $F$. We assume that $F$ is a communicating class in the sense that for all $x,y\in F$, there is a positive probability that $Y$ will visit $y$ before hitting 0 if it starts from $x$.
Recall that $J^N_t$ denotes the spine process
defined after the statement of Theorem \ref{thm:uniquespine}.

Recall that $\{Y^t, 0\leq s \leq t\}$ is the process $Y$ conditioned by $\tau_{F,0} > t$.
Let $Y^\infty$ denote the process $Y$ conditioned
never to leave $F$. The process $Y^\infty$ can be described
as the spatial component of the space-time Doob's $h$-process 
obtained from $\{(t,Y_t), t\geq 0\}$  by conditioning
by the parabolic function $h$ which is 0 on $F^c$ and grows
to infinity on $F$. Alternatively, we may define the distribution
of $Y^\infty$
as the limit, as $t\to \infty$, of distributions of $Y^t$. We will
not provide a more formal construction of $Y^\infty$
because it does not
pose any technical challenges in our context.

\begin{theorem}\label{thm:main1}
Consider a probability measure $\cX$ on $F$ and suppose that $\cX^N_0\Rightarrow \cX$ as $N\to\infty$. The distribution of $J^N$ converges to the distribution of
  $Y^\infty$ with the initial distribution $\cX$ when $N\to \infty$.
\end{theorem}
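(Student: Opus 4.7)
The plan is to combine Theorem~\ref{thm:convergence} with the quasi-stationary structure of the driving chain $Y$ on the finite state space $F$. Fix $t > 0$ and a bounded continuous functional $\phi$ on $D([0,t], E)$. The key structural observation is that for any $T \geq t$, the spine restricted to $[0, t]$ coincides with the restriction to $[0, t]$ of the DHP of the particle currently on the spine at time $T$, i.e., $\{J^N_s,\,0 \leq s \leq t\} = \{H^{\chi(J, T)}_T(s),\,0 \leq s \leq t\}$. Conditioning on $\F_T$ gives
\[
\E[\phi(\{J^N_s,\,0 \leq s \leq t\})] = \E\Big[\sum_{k=1}^{N} \phi(\{H^k_T(s),\,0 \leq s \leq t\})\, p^N_k(T)\Big],
\]
where $p^N_k(T) := \P(\chi(J, T) = k \mid \F_T)$. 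By exchangeability, $\E[p^N_k(T)] = 1/N$, but $p^N_k(T)$ depends nontrivially on the configuration $\X^N_T$: the spine is biased toward lineages at positions more favorable to long-term survival.

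The central step will be to identify the large-$N$ asymptotics of $p^N_k(T)$. Because $F$ is a communicating class, Perron-Frobenius theory applied to the restriction of $Y$'s generator to $F$ yields a principal eigenvalue $-\lambda < 0$ and a positive right eigenvector $h$ satisfying $\E_x[h(Y_T)\bl_{\tau_{F,0}>T}] = e^{-\lambda T} h(x)$. We expect that, in probability as $N \to \infty$,
\[
N\, p^N_k(T) \to \frac{h(X^k_T)}{\E[h(Y^T_T)]},
\]
reflecting that a lineage's chance of surviving indefinitely should factor through the $h$-value of its current position in the propagation-of-chaos limit. Combined with Theorem~\ref{thm:convergence}, this will give
\[
\lim_{N \to \infty} \E[\phi(\{J^N_s,\,0 \leq s \leq t\})] = \frac{\E[\phi(\{Y^T_s,\,0 \leq s \leq t\})\, h(Y^T_T)]}{\E[h(Y^T_T)]}.
\]
A direct computation using the eigen-identity above shows this right-hand side equals $\E[\phi(\{Y^\infty_s,\,0 \leq s \leq t\})]$ for every $T \geq t$; this is precisely the Doob $h$-transform characterization of $Y^\infty$ as the conditioning of $Y$ (starting from $\cX$) never to leave $F$, and the consistency across $T$ matches the $T$-independence of the left-hand side.

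The hardest part will be rigorously establishing the asymptotics of $p^N_k(T)$: showing that the probability of a specific lineage becoming the eventual spine factors through $h$-values of particle positions in the limit $N \to \infty$. This should proceed by analyzing the Fleming-Viot dynamics after time $T$, where the empirical distribution of $\X^N$ relaxes toward the quasi-stationary distribution of $Y$ on $F$ and the genealogical process of lineages exhibits coalescent-like behavior modulated by $h$. Once the spine distribution on each finite horizon $[0, t]$ is identified, convergence of $J^N$ in the Skorokhod space follows from finite-dimensional consistency and tightness, both routine on a finite state space.
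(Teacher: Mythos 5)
Your central step --- the assertion that $N\, p^N_k(T) \to h(X^k_T)/\E[h(Y^T_T)]$ in probability as $N\to\infty$ --- is exactly where the difficulty lies, and you leave it unproven. You acknowledge it is ``the hardest part'' and sketch a plan (``relaxation to quasi-stationarity'', ``coalescent-like behavior modulated by $h$''), but this is precisely the substance of the theorem, not a routine add-on. Moreover the example in Section~\ref{fixedN} shows that for a fixed finite $N$ the spine distribution is genuinely different from $Y^\infty$, so the stated $p^N_k(T)$ asymptotics are a nontrivial $N\to\infty$ claim, not a soft consequence of exchangeability. As written, the proposal records the correct answer (your $T$-independence check via the eigenfunction identity is a valid computation) but does not prove the theorem.

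The paper's actual proof avoids ever identifying the spine-selection probabilities. The key device is a double limit. First, the symmetry observation: conditionally on $\F_{s_k}$, every DHP whose endpoint $X^j_{s_k}$ equals a given state $x$ has the \emph{same} probability of being the initial segment of the spine --- one does not need to know this common value, only that it depends on $j$ through $X^j_{s_k}$ alone. Consequently, conditioned on its endpoint being $x$, the spine restricted to $[0,u]$ is distributed as a uniformly chosen DHP ending at $x$, which by Theorem~\ref{thm:convergence} converges (as $N\to\infty$) to $Y^{s_k}$ conditioned on $Y^{s_k}(s_k)=x$. Second, one sends $s_k\to\infty$: by Darroch--Seneta, $Y^{s_k}|_{[0,u]}$ conditioned on $\{Y^{s_k}(s_k)=x_k\}$ converges to $Y^\infty|_{[0,u]}$ for \emph{any} sequence $x_k\in F$. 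This uniformity over the endpoint is what renders the unknown weights $\P(J^N_{s_k}=x)$ harmless: any mixture over $x$ converges to the same limit. A diagonal/subsequence argument then glues the two limits. In short, you are trying to compute the spine-weights at a fixed time $T$ via Perron--Frobenius; the paper instead shows that as $T\to\infty$ the conditioned DHP law forgets its endpoint, making the weights irrelevant. That reformulation is the missing idea.
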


\begin{proof}

Consider any $t> 0$.
By Theorem \ref{thm:convergence}, the empirical distribution of the
      dynamical historical paths of $\X^N_t$ at time $t$ converges to the
      distribution of $\{Y^t(s), 0\leq s \leq t\}$, when $N\to \infty$. 
Since the set $F$ is finite, this implies that for every fixed
      $x\in F$, the empirical distribution of DHPs which end at $x$ at time
      $t$ converges, as $N\to \infty$, to the distribution of 
$\{Y^t(s), 0\leq s \leq t\}$ conditioned by  $\{Y^t(t) =x\}$.

 Fix any $u>0$ and a sequence $s_k \to \infty$.
The results of \cite{DarSen67} (see especially (1.1) and Section 4) can be used to show that for any  $x_k \in F$,
the distributions of $\{Y^{s_k}(s), 0\leq s \leq u\}$ conditioned by 
$\{Y^{s_k}(s_k) = x_k\}$ converge, as $k\to \infty$, to the 
distribution of $\{Y^\infty(s), 0\leq s \leq u\}$.

Since the state space $F$ is finite, we can use the diagonal method to show that for any sequence $N_m$ going to infinity 
we can find a subsequence $N^*_m$  of $N_m$ such that for some $p_{x,k} \geq 0$, we have for all $x$ and $k$,
\begin{align}\label{n28.5}
\lim_{m\to \infty}\P\left(J^{N^*_m}_{s_k} = x\right) = p_{x,k}.
\end{align}

Given $\X^{N^*_m}_{s_k}$,
every DHP which ends at $x$ at time $s_k$ has the same
      probability of being the initial part of the spine.
This observation, \eqref{n28.5} and the earlier remarks on the 
convergence of DHPs and convergence of $\{Y^{s_k}(s), 0\leq s \leq 
u\}$ conditioned by $\{Y^{s_k}(s_k) = x_k\}$ imply that the 
distribution of
$\{J^{N^*_m}_s,  0\leq s \leq u\}$ converges, as $m\to \infty$, to the distribution of $\{Y^\infty(s), 0\leq s \leq u\}$.
Since $u$ is arbitrary and $N^*_m$ is a subsequence of any sequence $N_m$, the theorem follows.
\end{proof}

Let $q_{xy}$ denote elements of the transition rate matrix $Q$ for the process $Y$ and let
\begin{align}\label{n6.1}
\lambda_t =  \sum_{y\in F} \P[\cX](Y^t_t=y) q_{y 0}.
\end{align}
Let $M^m_t$ be the number of times that the process $X^m$ branched
before time $t$. More formally, in the notation of Section \ref{sec:intro}, $M^m_t$ is the number of $k$ such that $\tau_k\leq t$ and $U^{i_k}_k = m$.
Let $M^J_t $ be the number of times that the process $J$ branched
before time $t$. More precisely, let $M^J_t $ be the number of $k$ such that 
$\tau_k\leq t$ and  either $i_k = \chi(J,\tau_k)$ or $U^{i_k}_k = \chi(J,\tau_k)$.

\begin{proposition}\label{n2.1}
Assume that $\cX^N_0\Rightarrow \cX$ as $N\to\infty$ for some probability measure
 $\cX$ on $F$. For every fixed $m$,
  the distribution of $M^m$ converges to the distribution 
  of the Poisson process with variable intensity $\lambda_t$ as $N\to \infty$.
\end{proposition}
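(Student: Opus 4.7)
The plan is to identify $M^m$ as a counting process on the natural filtration of $\X^N$, compute its predictable stochastic intensity, show that this intensity converges in probability to the deterministic function $\lambda_t$ by invoking Remark \ref{n13.1}, and then conclude by a standard compensator-convergence theorem for point processes.

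First I would write down the intensity. Since $Y$ is a continuous-time Markov chain on $E=\{0,1,\dotsc,n\}$ with $F^c=\{0\}$, the only way $X^m$ can branch at some time $\tau_k$ is for some particle $X^i$ with $i\ne m$ to jump directly to $0$, which happens at instantaneous rate $q_{X^i_t,0}$, and then to pick index $m$ via $U^{i_k}_k$, which happens with conditional probability $1/(N-1)$. Summing over $i$, the predictable intensity of $M^m$ equals
\begin{equation*}
\Lambda^m_N(t)\;=\;\frac{1}{N-1}\sum_{i\neq m} q_{X^i_t,\,0}
\;=\;\frac{N}{N-1}\,\cX^N_t(g)\;-\;\frac{g(X^m_t)}{N-1},
\end{equation*}
where $g(y):=q_{y,0}$ for $y\in F$. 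Applying Remark \ref{n13.1} to the bounded function $g$ on the finite set $F$ yields $\cX^N_t(g)\to \lambda_t$ in probability for each fixed $t\ge 0$, and the second summand above is $O(1/N)$. Hence $\Lambda^m_N(t)\to \lambda_t$ in probability for every $t$.

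Next I would pass from pointwise convergence of the intensities to convergence of the compensators $A^m_N(t):=\int_0^t \Lambda^m_N(s)\,ds$. Because $g$ takes only finitely many values, $0\le \Lambda^m_N(s)\le c:=\max_{y\in F}q_{y,0}$, so bounded convergence gives $A^m_N(t)\to \int_0^t \lambda_s\,ds$ in probability for each $t$; monotonicity of $A^m_N$ together with Lipschitz continuity of the deterministic limit then upgrades this to locally uniform convergence in probability on compact time intervals. A standard compensator-convergence theorem for simple counting processes (for example Ethier--Kurtz, or Jacod--Shiryaev VIII.3) then implies convergence in distribution, on the Skorokhod space, of $M^m$ to the inhomogeneous Poisson process with intensity $\lambda_t$, which is the assertion of the proposition. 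I expect the only real obstacle to be matching conventions when invoking that external theorem; the probabilistic content has already been supplied by the intensity calculation and by Remark \ref{n13.1}.
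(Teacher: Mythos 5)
Your proof is correct, and it takes a route that is genuinely lighter than the paper's. Both proofs rest on the same idea---identify the stochastic intensity of $M^m$ and show it converges to $\lambda_t$---and both lean on the Villemonais convergence result. But the paper chooses to establish the much stronger \emph{uniform} (in $s\in[0,t]$) convergence in probability of $\cX^N_s(\cdot)$ to $\P[\cX](Y^s_s=\cdot)$, stated as~\eqref{n13.2}; this requires a nontrivial stopping-time argument (pushing a putative deviation onto a fixed time $k\delta$ and invoking Remark~\ref{n13.1} there). You instead use Remark~\ref{n13.1} only pointwise, at each fixed $t$, and bridge to the compensator by a dominated-convergence argument (the intensity is uniformly bounded since $F$ is finite, so pointwise-in-probability convergence of $\Lambda^m_N(s)$ upgrades to $L^1$ convergence of $\int_0^t\Lambda^m_N\,ds$), and then invoke the standard convergence-to-Poisson theorem for simple counting processes with compensators converging to a deterministic continuous limit. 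That theorem needs only pointwise convergence at each $t$, so your passage to locally uniform convergence is harmless overkill. Two small remarks: first, you are right that the exact intensity is $\tfrac{1}{N-1}\sum_{i\neq m}q_{X^i_{s-},0}$ rather than $\cX^N_s(g)$, so your $O(1/N)$ correction term makes the paper's ``Poisson with intensity $\sum_y\cX^N_t(y)q_{y0}$'' claim precise; second, the reason the paper expends the extra effort on the uniform estimate~\eqref{n13.2} is that it is reused verbatim later (e.g.\ in~\eqref{n16.1} and~\eqref{n13.5} inside the proof of Theorem~\ref{thm:main2}), so within the paper's economy the stronger lemma is not wasted, even though for Proposition~\ref{n2.1} alone your argument is sufficient and cleaner.
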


\begin{proof}

Every process $M^m$ is a Poisson process with variable random intensity equal to
$\sum_{y\in F} \cX^N_t(y)  q_{y 0}$ at time $t$. Fix any $t>0$.
Definition \eqref{n6.1} together with finiteness of $F$ imply that it 
will suffice to prove that,  for all $\eps_1, p_1>0$,
\begin{align}\label{n13.2}
\limsup_{N\to \infty}
\P\left(\sup_{s\in[0,t]}\sum_{y\in F} \left|\cX^N_s(y) - \P[\cX](Y^s_s=y) \right| \geq \eps_1
\right) \leq p_1.
\end{align}
Since $F$ is finite, it will be enough to prove that,
for all $\eps_1, p_1>0$ and $y\in F$,
\begin{align*}
\limsup_{N\to \infty}
\P\left(\sup_{s\in[0,t]} \left|\cX^N_s(y) - \P[\cX](Y^s_s=y) \right| \geq \eps_1
\right) \leq p_1.
\end{align*}
Suppose to the contrary that there exist $p_1, \eps_1>0$ and $y\in F$  such that
\begin{align*}
\limsup_{N\to \infty}\P\left(\sup_{s\in[0,t]} \left|\cX^N_s(y) - \P[\cX](Y^s_s=y) \right| \geq \eps_1
\right) \geq p_1.
\end{align*}
The set $F$ has cardinality $n$ and $\sum_{y\in F} \cX^N_s(y)=  \sum_{y\in F}\P[\cX](Y^s_s=y) = 1$ so the above assumption implies that 
there exist $p_1, \eps_1>0$ and $y^\ast\in F$  such that
\begin{align}\label{n30.2}
\limsup_{N\to \infty} \P\left(\sup_{s\in[0,t]} \left(\cX^N_s(y^\ast) 
- \P[\cX](Y^s_s=y^\ast) \right) \geq \eps_1/n
\right) \geq p_1.
\end{align}

It is easy to see that for any $\eps_1, p_1 >0$ one can find $\delta \in(0,t)$ so small that for every $y\in F$ and $s\geq 0$ the following holds.
\begin{enumerate}
\item
If the number of  $k$ such that $X^k_s = y$ is greater
than or equal to $j$ then with probability greater than $1-p_1$,
the number of $k$ such that $X^k_u = y$ for all $u\in[s,s+\delta]$
is greater than $j(1-\eps_1/(3n))$. 
\item
For all $u\in[s,s+\delta]$,
\begin{align*}
\left|\P[\cX](Y^s_s=y)
- \P[\cX](Y^u_u=y) \right| \leq \eps_1/(3n).
\end{align*}
\end{enumerate}
Let 
\begin{align*}
T &= \inf\left\{s \geq 0: \cX^N_s(y^\ast) - \P[\cX](Y^s_s=y^\ast) 
\geq \eps_1/n\right\},\\
k_1 &= \inf\{ k \geq 0: k\delta \geq T\}.
\end{align*}
By \eqref{n30.2} and the strong Markov property applied at $T$,
\begin{align*}
\limsup_{N\to \infty} \P\left(\cX^N_{k_1\delta}(y^\ast) - 
\P[\cX](Y^{k_1\delta}_{k_1\delta}=y^\ast)  \geq 2\eps_1/(3n)
\right) \geq p_1(1-p_1).
\end{align*}
Note that $k_1 \delta < 2 t$ and let $m_1 = \lceil 2 t /\delta  \rceil+1$. 
It follows that for some non-random $0\leq k \leq m_1$,
\begin{align*}
\limsup_{N\to \infty} \P\left(\cX^N_{k\delta}(y^\ast) - 
\P[\cX](Y^{k\delta}_{k\delta}=y^\ast)   \geq 2\eps_1/(3n)
\right) \geq p_1(1-p_1)/m_1.
\end{align*}
This contradicts Remark \ref{n13.1} applied at the time $ k\delta$. 
The contradiction completes the proof.
\end{proof}

Recall the Prokhorov distance between probability measures on the Skorokhod space (see \cite[p.~238]{Bill}). 
Convergence in the Prokhorov distance is equivalent to the weak convergence of measures.

\begin{corollary}\label{n25.1}
Assume that $\cX^N_0\Rightarrow \cX$ as $N\to\infty$ for some probability measure
 $\cX$ on $F$.
Fix any $t_1, t_2 >0$.
Consider a Fleming-Viot process $\{\wh \X_t, t\geq s\}$ with $N$ particles, where $s\in [0, t_1]$. 
For any $\eps >0$ there exist $\delta>0$ and $N_1$ such that for all $N\geq N_1$, every $1\leq m \leq N$,
$s\in [0, t_1]$ and  all initial distributions of $\wh \X$ satisfying $\left|\wh \cX _s(y) - \P[\cX](Y^s_s=y)\right| \leq \delta $ for all $y\in F$, the Prokhorov distance between 
  the distribution of $\{\wh M^m_t, t\in[s,s+t_2]\}$ and the distribution 
 of the Poisson process with intensity $\lambda_t$, given by 
 \eqref{n6.1}, on the interval $[s, s+t_2]$ is less than $\eps$.

\end{corollary}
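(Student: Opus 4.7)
My plan is to derive the corollary from Proposition \ref{n2.1} by a compactness-and-contradiction argument that exploits the time-homogeneity of the Fleming-Viot dynamics. First I set $\mu_t(y)=\P[\cX](Y^t_t=y)$, so that $\lambda_t=\sum_{y\in F}\mu_t(y)q_{y0}$; because $F$ is finite and $\mu_t$ is the normalisation of $\cX P_t$ for the sub-Markov semigroup $P_t$ of $Y$ killed at $0$, both $t\mapsto\mu_t$ and $t\mapsto\lambda_t$ are continuous, and hence uniformly continuous on $[0,t_1+t_2]$. A short computation using the Markov property of $Y$ yields the flow identity
\begin{equation*}
\mu_{s+u}(y)=\P[\mu_s]\bigl(Y^u_u=y\bigr),\qquad s,u\ge 0,\; y\in F,
\end{equation*}
which says that $\mu_{s+u}$ is exactly the time-$u$ marginal of the conditioned process started from $\mu_s$.

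I then assume the corollary fails: for some $\eps>0$ I can extract sequences $N_k\to\infty$, $\delta_k\to 0$, $s_k\in[0,t_1]$, $m_k\in\{1,\dots,N_k\}$, and Fleming-Viot processes $\wh\X^{N_k}$ started at time $s_k$ with initial empirical distributions satisfying $|\wh\cX^{N_k}_{s_k}(y)-\mu_{s_k}(y)|\le\delta_k$ for every $y\in F$, for which the Prokhorov distance between the law of $\{\wh M^{m_k}_t,\,t\in[s_k,s_k+t_2]\}$ and the Poisson process of intensity $\lambda_t$ on the same interval stays at least $\eps$. Using compactness of $[0,t_1]$ I pass to a subsequence (not relabelled) with $s_k\to s^\ast\in[0,t_1]$; continuity of $\mu$ then forces $\wh\cX^{N_k}_{s_k}\Rightarrow\mu_{s^\ast}$.

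Next I time-shift by defining $\wt\X^{N_k}_u:=\wh\X^{N_k}_{s_k+u}$ for $u\in[0,t_2]$. By time-homogeneity of the Fleming-Viot dynamics each $\wt\X^{N_k}$ is itself a Fleming-Viot process with $N_k$ particles whose initial empirical measure converges weakly to $\mu_{s^\ast}$, so Proposition \ref{n2.1} (applied with $\cX$ replaced by $\mu_{s^\ast}$) yields that the branching counter $\wt M^{m_k}$ converges in distribution on $[0,t_2]$ to the Poisson process of intensity $\sum_y\P[\mu_{s^\ast}](Y^u_u=y)q_{y0}$, which by the flow identity equals $\lambda_{s^\ast+u}$. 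On the other hand, uniform continuity of $\lambda$ together with $s_k\to s^\ast$ gives $\lambda_{s_k+u}\to\lambda_{s^\ast+u}$ uniformly in $u\in[0,t_2]$, and inhomogeneous Poisson processes with uniformly convergent intensities are close in the Prokhorov metric (via a standard random-time-change coupling). The reference Poisson processes in the statement therefore converge to the same Poisson limit, contradicting the lower bound $\eps$ on the Prokhorov distance.

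The main obstacle is checking that Proposition \ref{n2.1} really does apply to the shifted sequence $\wt\X^{N_k}$, whose initial empirical measures only approximate $\mu_{s_k}$ rather than equalling some fixed $\cX$. Inspecting the proof of the proposition shows that it uses only Remark \ref{n13.1} (which requires nothing beyond weak convergence of initial empirical distributions on $F$) and a bootstrap from fixed-time convergence to uniform convergence on compacts via \eqref{n13.2}; both remain valid after the shift once $\wh\cX^{N_k}_{s_k}\Rightarrow\mu_{s^\ast}$ is in hand. A secondary technical point is the Prokhorov-continuity of the inhomogeneous Poisson law as a function of its intensity in $L^\infty([0,t_2])$, which is standard.
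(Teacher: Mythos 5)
Your proof follows the same compactness-and-contradiction argument as the paper's: extract a violating sequence, pass to a subsequence with $s_k\to s^\ast$, shift by $\theta_{s_k}$, and invoke Proposition \ref{n2.1}. You also make explicit two steps the paper leaves tacit — the flow identity $\mu_{s+u}(\cdot)=\P[\mu_s](Y^u_u=\cdot)$ identifying the limiting intensity as $\lambda_{s^\ast+u}$, and the Prokhorov-continuity of the inhomogeneous Poisson law in its intensity, needed because the reference process is on $[s_k,s_k+t_2]$ while the limit is on $[s^\ast,s^\ast+t_2]$ — which is a welcome bit of extra rigor.
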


\begin{proof}

Suppose that the corollary is false. Then there exist $\eps>0$ and sequences $(\wh \X^k)_{k\geq 1}$, $(s_k)_{k\geq 1}$, $(\delta_k)_{k\geq 1}$ and $(N_k)_{k\geq 1}$, such that we have $s_k \in [0,t_1]$ for all $k$, $N_k \to \infty$,  $\delta_k \to 0$,
$\left|\wh \cX^{k} _{s_k}(y) - \P[\cX](Y^{s_k}_{s_k}=y)\right| \leq \delta_k $ and the Prokhorov distance between 
  the distribution of $\{\wh M^{k,m}_t, t\in[s_k,s_k+t_2]\}$ and the distribution 
  of the Poisson process with variable intensity $\lambda_t$ 
on the interval $[s_k, s_k+t_2]$ is greater than $\eps$. By compactness, we can find a convergent subsequence of $(s_k)_{k\geq 1}$. By abuse of notation, we will assume that $s_k\to s_\infty \in [0,t_1]$. This and the continuity of the transition probabilities of $Y$ imply that $\left|\wh \cX^{k} _{s_k}(y) - \P[\cX](Y^{s_\infty}_{s_\infty}=y)\right| \to 0 $ for all $y\in F$. 
Let $\X^{N_k}_t := \wh \X_t^{N_k} \circ \theta_{s_k}$.
An application of Proposition \ref{n2.1} to processes $\X^{N_k}$ shows that  the distribution of $\{\wh M^{k,m}_t, t\in[s_k,s_k+t_2]\}$ converges to the distribution 
  of the Poisson process with variable intensity $\lambda_{t+s_\infty}$ 
on the interval $[0, t_2]$. This contradicts the assumption made at the beginning of the proof.
\end{proof}

\begin{theorem}\label{thm:main2}
Assume that $\cX$ is a probability measure on $F$ with $\cX(x) >0$ for all $x\in F$. Suppose that
$\cX^N_0\Rightarrow \cX$ as $N\to\infty$.
  The distribution of $M^J$ converges to the distribution 
  of the Poisson process with intensity $2\lambda_t$ when $N\to 
  \infty$, where $\lambda_t$ is given by \eqref{n6.1}.
\end{theorem}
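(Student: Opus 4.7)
The plan is to establish the Poisson limit for $M^J$ via a time-discretization argument combined with Corollary \ref{n25.1} and Theorem \ref{thm:main1}. Fix $\delta > 0$ small and partition $[0, t]$ into intervals $I_j = [j\delta, (j+1)\delta)$. Since the spine identity $\chi(J, \cdot)$ changes at finite rate as $N \to \infty$, a union bound shows that, except on an event of probability $O(\delta)$, the spine identity is constant on each $I_j$; denote this constant value by $m^*_j := \chi(J, j\delta)$. The strong Markov property at $j\delta$, combined with Remark \ref{n13.1}, guarantees that the empirical distribution $\cX^N_{j\delta}$ is, with high probability, arbitrarily close to the limiting distribution $\P[\cX](Y^{j\delta}_{j\delta} = \cdot)$.

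The central observation is that at each branching event $\tau_k$ where the spine's previous identity was the receiver $U^{i_k}_k$, two new offspring labels $\alpha+(i_k, k)$ and $\alpha+(U^{i_k}_k, k)$ are born, both extending the spine's previous label $\alpha$, borne respectively by particles $i_k$ and $U^{i_k}_k$ at the receiver's location. The defining disjunction of $M^J_t$---``$i_k = \chi(J, \tau_k)$ or $U^{i_k}_k = \chi(J, \tau_k)$''---covers both possible continuations of the spine among these two symmetric offspring. Applying Corollary \ref{n25.1} to each $I_j$, the condition $\{U^{i_k}_k = \chi(J, \tau_k)\}$ (receiver-continuation) contributes Poisson intensity $\lambda_{j\delta}$, and by symmetry of the two offspring the condition $\{i_k = \chi(J, \tau_k)\}$ (exiter-continuation) contributes another $\lambda_{j\delta}$, summing to $2\lambda_{j\delta}$ per interval.

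Assembling the per-interval Poisson counts using the asymptotic independence across intervals provided by the strong Markov property, and then passing to the iterated limits $N \to \infty$ followed by $\delta \to 0$, yields convergence of $M^J$ to the Poisson process with intensity $2\lambda_s$ on $[0, t]$ in distribution on the Skorokhod space. The Poissonian character of the limit follows from Corollary \ref{n25.1} applied on each short interval, together with the fact that a concatenation of asymptotically independent Poisson increments on a fine partition produces a Poisson process in the limit.

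The main obstacle is a rigorous justification of the factor of two: one must show that both $\{i_k = \chi(J, \tau_k)\}$ and $\{U^{i_k}_k = \chi(J, \tau_k)\}$ separately contribute asymptotic rate $\lambda_t$ in the limit. This is a delicate consequence of the symmetric structure of the spine decomposition and the fact that, at each spine-involved branching, the spine's continuation is (asymptotically) uniformly distributed between the two offspring labels that inherit $\alpha$. Making this precise requires an extension of Corollary \ref{n25.1} that tracks both candidate offspring labels jointly rather than a single generic particle, combined with Theorem \ref{thm:main1} to ensure the spine's marginal distribution lies in the correct asymptotic regime.
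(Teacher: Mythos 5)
Your overall strategy of discretizing time and combining the Markov property with Corollary \ref{n25.1} matches the paper's approach in broad outline. However, the central step -- the justification of the factor of two -- contains a genuine gap, and the way you attempt to fill it would not succeed.

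You claim that Corollary \ref{n25.1} gives rate $\lambda_t$ to the receiver-continuation event $\{U^{i_k}_k = \chi(J,\tau_k)\}$, and that by ``symmetry of the two offspring'' the exiter-continuation $\{i_k = \chi(J,\tau_k)\}$ contributes another $\lambda_t$. This conflates two different things. Corollary \ref{n25.1} gives rate $\lambda_t$ to the event that a \emph{fixed} particle $m$ receives a jump. If the spine behaved like a fixed or uniformly chosen particle, then ``someone jumps onto the spine'' would occur at rate $\lambda_t$, and by the exact symmetry between the two post-branching copies, \emph{each} of your two continuation events would inherit rate $\lambda_t/2$, giving total rate $\lambda_t$ -- not $2\lambda_t$. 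The symmetry you cite is correct (and indeed used by the paper) but it cannot be the source of the doubling: it only splits whatever total branching rate the spine has into two equal halves.

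The actual source of the factor of two is a size-biasing effect that your argument does not address. The paper works backward: conditionally on $\F_{m\delta}$, the spine's particle $\chi(J,m\delta)$ is uniform over the particles at its location $x$ at time $m\delta$. Among those, the number whose DHP had a branching event on $[(m-1)\delta,m\delta]$, denoted $R_{1,x}$, is roughly \emph{twice} the number of jumps $R_{5,x}$ that landed at $x$, because each jump produces two DHPs sharing that branching event (the jumper $i_k$ and the receiver $U^{i_k}_k$). Since $R_{5,x}\approx\delta\lambda_t N\cX^N(x)$, the conditional probability of a spine branching event is $R_{1,x}/(N\cX^N_{m\delta}(x))\approx 2\delta\lambda_t$. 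Looked at forward, the spine at $(m-1)\delta$ is indeed uniform at its location, but the event ``the spine receives a jump'' is positively correlated with the selection of the spine from the future, and this correlation is exactly what raises the rate from $\lambda_t$ to $2\lambda_t$. Corollary \ref{n25.1} on its own says nothing about this correlation; the ``extension tracking both candidate offspring labels'' that you allude to as a missing ingredient is not what is needed -- what is needed is the backward uniformity-over-DHPs-at-a-location argument together with the count $R_{1,x}\approx 2R_{5,x}$, and the careful error control (the paper's bounds on $R_1,\dots,R_4,R_{6,x}$) to justify replacing the random $R_{1,x}/(N\cX^N(x))$ by $2\delta\lambda_t$ in a uniform way.
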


We have assumed that $\cX(x) >0$ for all $x\in F$ for technical reasons. We expect the theorem to hold without this assumption.

\begin{proof}[Proof of Theorem \ref{thm:main2}]

Fix any $t_1 >0$ and let $m_1  = \lceil t_1 /\delta \rceil+1$, where 
$\delta$ will be specified later.
It would suffice to prove the following assertions.
\begin{enumerate}

\item
For every  $\eps >0$ there exists $\delta_1 >0$ such that
for every $\delta\in(0,\delta_1)$ there exists $N_1$ such that for $N\geq N_1$ and $m=1,2, \dots, m_1$, 
\begin{align}\label{n8.1}
&\P\left(  M^J_{m\delta} -  M^J_{(m-1)\delta} =1 \mid \F_{(m-1)\delta}\right)
\in [(1-\eps)2\delta\lambda_{(m-1) \delta } ,
(1+\eps)2\delta\lambda_{(m-1) \delta } ].
\end{align}

\item
There exist $c$ and $\delta_1 >0$ such that
for all $\delta\in(0,\delta_1)$ there exists $N_1$ such that for $N\geq N_1$ and $m=1,2, \dots, m_1$, 
\begin{align}
\P\left(  M^J_{m\delta} -  M^J_{(m-1)\delta} >1 \mid \F_{(m-1)\delta}\right)
\leq c \delta^2 .\label{n8.2}
\end{align}

\end{enumerate}

Our strategy will be to prove estimates of the type \eqref{n8.1}-\eqref{n8.2} but our argument will be a little bit more complicated.


Since $F$ is finite, we have  
\begin{align}\label{n29.1}
c_1 := n \sup_{x\in F, y\in E} q_{xy} <\infty.
\end{align}

 It is easy to see that for every probability measure $\cX$ on $F$ with $\cX(x) >0$ for all $x\in F$ and
$\eps>0$ there exist $c_2,\delta_1>0$ and $c_3<\infty$ such that for all $\delta \in (0, \delta_1]$, $t\in [0, 2t_1]$, and $x\in F$, if $Y_0$ has the distribution $\cX$ then, 
\begin{align}\label{n14.2}
\P(Y^t_t =x) &\geq c_2,\\
\label{n13.4}
\sup_{s\in[t, t+\delta]}\P(Y^s_s =x) &\leq 
(1 + \eps) \inf_{s\in[t, t+\delta]}\P(Y^s_s =x),\\
\label{n13.3}
\sup_{s\in[t, t+\delta]}\lambda_s &\leq (1 + \eps)\inf_{s\in[t, t+\delta]}\lambda_s ,\\
c_2 &\leq \lambda_t \leq c_3. \label{n18.2}
\end{align}

Let $M_t = \sum_{m=1}^N M^m_t$.
By \eqref{n13.2} and \eqref{n13.3},  for any $\eps>0$, some $\delta_1>0$,
for every  $\delta\in (0,  \delta_1)$ there exists $N_1$ so large that for $N\geq N_1$ and $1 \leq m \leq m_1$, 
\begin{align}\label{n16.1}
\P\left((1-\eps) N \delta\lambda_{(m-1)\delta} \leq M_{m\delta} - M_{(m-1)\delta} \leq (1+\eps) N \delta \lambda_{(m-1)\delta}\right) \geq 1- \delta^4.
\end{align}

By \eqref{n13.2}, for any $\eps>0$, some $\delta_1>0$,
for every  $\delta\in (0,  \delta_1)$ there exists $N_1$ so large that for $N\geq N_1$ and $1 \leq m \leq m_1$, 
\begin{align}\label{n13.5}
\P\left(\sup_{x\in F} \sup_{s\in [(m-1)\delta ,m\delta]} \frac{| \cX^N_s(x) - \P(Y^s_s =x)| }{\P(Y^s_s =x)} \leq \eps \right)
\geq 1-\delta ^4,
\end{align}
and, trivially following from the last formula,
\begin{align}\label{n14.1}
\P\left( \sup_{x\in F} \frac{| \cX^N_{(m-1)\delta}(x) - \P(Y^{(m-1)\delta}_{(m-1)\delta} =x)| }{\P(Y_{(m-1)\delta} ^{(m-1)\delta} =x)} \leq \eps\right)
\geq 1-\delta ^4.
\end{align}
Let $A_1$ be the event in the last formula.

Let $\wt R_1$ be the number of $k$ such that there was exactly one 
branching event and at least one jump along $H^k_{m\delta}$ on the 
interval $[(m-1)\delta ,m\delta]$, and $\chi(k, m\delta, m\delta) 
=\chi(k, m\delta, (m-1)\delta)$. Recall the definition of $c_1$ from 
\eqref{n29.1}.
The intensity of jumps of any process $X^j$ at any position is 
bounded by $c_1< \infty$. It follows that for any $\eps>0$, the 
probability that a process $X^j$ will jump at least once on the 
interval $[(m-1)\delta ,m\delta]$ and some other process will jump 
onto $X^j$ on the same interval (i.e., $U^{i_k}_k = j$ for some 
$\tau_k \in [(m-1)\delta ,m\delta]$) is bounded by $2 c_1 \delta 
\cdot 2c_1  \delta $, for small $\delta$. We can assume that 
$\eps<1$ in \eqref{n16.1} so we see that there exists $c_4$ such that 
for some 
$\delta_1>0$ and
all  $\delta\in (0,  \delta_1)$  there exists $N_1$ so large that for $N\geq N_1$ and $1 \leq m \leq m_1$, 
\begin{align*}
 \E \wt R_1  &\leq c_4 N\delta^2 , \qquad
\E  \wt R_1^2 
 \leq c_4 N\delta^2.
\end{align*}
This and \eqref{n18.2} imply that for any $\eps>0$ , some $\delta_1>0$,
for every  $\delta\in (0,  \delta_1)$,  there exists $N_1$ so large 
that for $N\geq N_1$ and $1 \leq m \leq m_1$, 
\begin{align}\label{n18.1}
\P(\wt R_1 \geq  N  \eps\delta ) 
=\P\left(\wt R_1^2\geq(N\eps\delta)^2\right)
&\leq\frac{c_4 N\delta^2 } {(N \eps\delta)^2}
= \frac{c_4 } {N \eps^2 }
\leq \delta ^4.
\end{align}

Let $\wh R_1$ be the number of $k$ such that there was exactly one 
branching event and at least one jump along $H^k_{m\delta}$ on the 
interval $[(m-1)\delta ,m\delta]$, and at least one jump ocurred later 
than the branching event. An argument very similar to that leading to 
\eqref{n18.1} shows that for any $\eps>0$, some $\delta_1>0$, for 
every  $\delta\in (0,  \delta_1)$ there exists $N_1$ so large that 
for $N\geq N_1$ and $1 \leq m \leq m_1$, 
\begin{align}\label{n20.3}
\P(\wh R_1 \geq  N  \eps\delta ) 
\leq \delta ^4.
\end{align}

Let $R_1$ be the number of $k$ such that there was exactly one 
branching event and at least one jump along $H^k_{m\delta}$ on the 
interval $[(m-1)\delta ,m\delta]$.
 Then  $R_1\leq 2 \wt R_1+ \wh R_1$. We combine \eqref{n18.1} and 
 \eqref{n20.3} to conclude that for any $\eps>0$, some $\delta_1>0$, 
 for every  $\delta\in (0,  \delta_1)$ there exists $N_1$ so large 
 that for $N\geq N_1$ and $1 \leq m \leq m_1$, 
\begin{align}\label{n20.4}
\P( R_1 \geq 3 N  \eps\delta ) 
\leq 2\delta ^4.
\end{align}
Since $\eps>0$ is arbitrarily small in the last formula, \eqref{n14.2}, \eqref{n18.2}, \eqref{n14.1} and \eqref{n20.4} imply that for any $\eps>0$, some $\delta_1>0$, for every  $\delta\in (0,  \delta_1)$ there exists $N_1$ so large that for $N\geq N_1$ and $1 \leq m \leq m_1$, 
\begin{align}\label{n22.1}
\P\left( R_1 \geq  N  \eps\delta \lambda_{(m-1)\delta}
\inf_{x\in F} \cX^N_{(m-1)\delta}(x)\right) 
\leq 3\delta ^4.
\end{align}

Recall the set $\cA$, function $\cL$, and the notions of branching and offspring for elements of $\cA$ introduced in Section \ref{ssec:DHPs}. 
We will consider a branching process $\B$ whose first generation of  
individuals consists of
$\alpha\in \cA$ such that $\alpha = \cL(X^i_{(m-1)\delta})$ for some $i$. The branching process includes all descendants $\beta\in \cA$ of the first generation  individuals provided $\beta= \cL(X^j_s)$  for some $j$ and $(m-1)\delta \leq s \leq m \delta$. 
An individual has a pair of offspring on $[(m-1)\delta,m\delta]$ with 
probability not greater than $2c_1\delta$, for small $\delta$. 
Otherwise it has no offspring. It follows that the expected number of 
individuals in the $j$-th generation is bounded by $N 
(2c_1\delta)^{j-1} $.

Let $R_2$ be the number of $k$ such that  there were exactly two 
branching events along $H^k_{m\delta}$ on the interval $[(m-1)\delta 
,m\delta]$. Let $R_3$ be the number of $k$ such that  there were 
exactly three branching events along $H^k_{m\delta}$ on the interval 
$[(m-1)\delta ,m\delta]$.
Let $R_4$ be the number of 
individuals in the fifth and higher generations of $\B$. Unlike in 
the case of $R_1$ we do not impose any conditions on the number of 
jumps.

Note that the inequality $\delta\leq \eps/ (8c_1^2)$
is equivalent to
$N\eps\delta\geq 
N(2c_1\delta)^2+N\eps\delta/2$ so either one implies
that $\P(R_2 \geq N  \eps\delta ) 
\leq \P(R_2 \geq N(2c_1\delta)^2+ N \eps\delta/2 ) $.

Standard formulas imply that
the variance of the number of individuals in the third generation of 
$\B$ is bounded by $4 N (2c_1\delta)^2$ for small $\delta$. This 
implies that for any $\eps>0$, some $\delta_1\in(0, \eps/ (8c_1^2)) $, for 
every  
$\delta\in (0,  \delta_1)$,  there exists $N_1$ so large that for 
$N\geq N_1$ and $1 \leq m \leq m_1$, 
\begin{align}\label{n16.3}
\P&(R_2 \geq N  \eps\delta ) 
\leq \P(R_2 \geq N(2c_1\delta)^2+ N \eps\delta/2 ) 
\leq
\frac{
 4N (2c_1\delta)^2}
{(N \eps\delta/2 )^2}
=
 \frac{64 c_1^2}
{N \eps^2}
\leq \delta ^4.
\end{align}

The inequality $\delta\leq \sqrt\eps/(4c_1^{3/2})$
is equivalent to
$N\eps\delta\geq 
N(2c_1\delta)^3+N\eps\delta/2$ so either one implies
that $\P(R_3 \geq N  \eps\delta ) 
\leq \P(R_3 \geq N(2c_1\delta)^3+ N \eps\delta/2)  $.

The variance of the number of individuals in the fourth generation of 
$\B$ is bounded by $4 N (2c_1\delta)^3$ for small $\delta$, so
for any $\eps>0$, some $\delta_1\in(0, \sqrt\eps/(4c_1^{3/2}))$,  for every  $\delta\in 
(0,  \delta_1)$, there exists $N_1$ so large that for 
$N\geq N_1$ and $1 
\leq m \leq m_1$,  
\begin{align}\label{n20.5}
\P&(R_3 \geq N  \eps\delta ) 
\leq \P(R_3 \geq N(2c_1\delta)^3+ N \eps\delta/2) 
\leq\frac{
 4N (2c_1\delta)^3}
{(N \eps\delta /2)^2} 
= \frac{128 c_1^3 \delta}
{N \eps^2}
\leq \delta ^4.
\end{align}

For some $\delta_1>0$, for every  $\delta\in (0,  \delta_1)$ there exists $N_1$ so large that for $N\geq N_1$ and $1 \leq m \leq m_1$, 
\begin{align*}
\E(R_4 ) &\leq
\sum _{j\geq 4} N (2c_1\delta)^j
\leq 2
N(2c_1\delta)^4 .
\end{align*}
This and \eqref{n18.2} imply that for any $\eps>0$, some $\delta_1>0$, for every  $\delta\in (0,  \delta_1)$ there exists $N_1$ so large that for $N\geq N_1$ and $1 \leq m \leq m_1$, 
\begin{align}\label{n16.4}
\P&(R_4 \geq N \eps\delta ) \leq
\frac{
 2N (2c_1\delta)^4}
{N \eps\delta }  = 4c_1^4 \delta^3/\eps 
\leq \delta ^{5/2}.
\end{align}

The same justification which enabled us to conclude \eqref{n22.1} from \eqref{n20.4} 
also gives the following estimates, based on \eqref{n16.3}, \eqref{n20.5}
and \eqref{n16.4}. For any $\eps>0$ there exists $\delta_1>0$ such that for
all  $\delta\in (0,  \delta_1)$ 
there exists $N_1$ so large that for $N\geq N_1$ and $1 \leq m \leq m_1$, 
\begin{align}\label{n22.2}
\P&\left(R_2 \geq N  \eps\delta \lambda_{(m-1)\delta}\inf_{x\in F} 
\cX^N_{(m-1)\delta}(x)\right) 
\leq \delta ^4,\\
\P&\left(R_3 \geq N  \eps\delta \lambda_{(m-1)\delta}\inf_{x\in F} 
\cX^N_{(m-1)\delta}(x)\right) 
\leq \delta ^4, \label{n22.3} \\
\P&\left(R_4 \geq N \eps\delta \lambda_{(m-1)\delta}\inf_{x\in F} 
\cX^N_{(m-1)\delta}(x)\right)
\leq  \delta ^{5/2} . \label{n22.4}
\end{align}

Let $R_{5,x}$ be the number of particles that jumped to $x$ on the 
interval $[(m-1)\delta ,m\delta]$.
Since the particles which exit from $F$ jump to the position of a uniformly chosen particle in $F$, \eqref{n14.2}, \eqref{n13.4}, \eqref{n16.1} and \eqref{n13.5} imply that for any $\eps>0$, some $\delta_1>0$, for every  $\delta\in (0,  \delta_1)$ there exists $N_1$ so large that for $N\geq N_1$ and $1 \leq m \leq m_1$, 
\begin{align}\label{n13.6}
\P\left( \sup_{x\in F} \frac{|R_{5,x} - \delta 
\lambda_{(m-1)\delta}N\cX^N_{(m-1)\delta}(x)|}{\delta 
\lambda_{(m-1)\delta}N\cX^N_{(m-1)\delta}(x)}
\leq \eps
\right) \geq 1-\delta ^4.
\end{align}
Let $A_2$ be the event in the last formula.

Let $R_{1,x}$ be the number of $k$ such that there was exactly one 
branching event along $H^k_{m\delta}$ on the interval $[(m-1)\delta 
,m\delta]$, and $H^k_{m\delta}(m\delta)=x$.
Note that 
\begin{align*}
R_{5,x} - R_1-  R_2 - R_3 - R_4 \leq R_{1,x} \leq 2 R_{5,x} + R_1.
\end{align*}
This, \eqref{n22.1}, \eqref{n22.2}, \eqref{n22.3}, \eqref{n22.4} and \eqref{n13.6}, imply that 
for any $\eps>0$, some $\delta_1>0$, for every  $\delta\in (0,  \delta_1)$ there exists $N_1$ so large that for $N\geq N_1$ and $1 \leq m \leq m_1$, 
\begin{align}\label{n22.8}
\P\left( \sup_{x\in F} \frac{|R_{1,x} - 2\delta 
\lambda_{(m-1)\delta}N\cX^N_{(m-1)\delta}(x)|}{\delta 
\lambda_{(m-1)\delta}N\cX^N_{(m-1)\delta}(x)}
\leq 5\eps
\right) \geq 1-2\delta ^{5/2}.
\end{align}
Let $A_3$ be the event in the last formula.

Let $R_{6,x}$ be the number of particles that were located at $x$ at 
time $(m-1)\delta$ and jumped from $x$ to some other state on the 
interval $[(m-1)\delta ,m\delta]$. Recall the definition 
\eqref{n29.1} of $c_1$ and that of the event $A_1$ in \eqref{n14.1}. 
We use \eqref{n14.2} and \eqref{n14.1} to see that for any $\eps>0$, 
some $\delta_1>0$, for every  $\delta\in (0,  \delta_1)$ there exists 
$N_1$ so large that for $N\geq N_1$ and $1 \leq m \leq m_1$, 
\begin{align}\label{n20.1}
\P&\left(\sup_{x\in F} \frac{ R_{6,x} }{ 2 c_1 \delta 
N\cX^N_{(m-1)\delta}(x)}
\geq 1 \right)
\leq \P(A_1^c) + 
\P\left( \sup_{x\in F} \frac{ R_{6,x} }{ 2 c_1 \delta 
N\cX^N_{(m-1)\delta}(x)}
\geq 1
\biggm| A_1\right)\\
&\leq \delta^4 + n\max_{x\in F}\E\left(
\frac{c_1 \delta N\cX^N_{(m-1)\delta}(x)}
{( c_1 \delta N\cX^N_{(m-1)\delta}(x))^2} \biggm| A_1\right)
\leq 2 \delta^4. \nonumber
\end{align}
Let $A_4$ be the event on the left hand side in the last formula.

Suppose that $A_2  \cap A_4$ holds. Then for every $x\in F$,
\begin{align*}
N\cX^N_{m\delta}(x)
 \leq N\cX^N_{(m-1)\delta}(x)+
R_{5,x} \leq
N\cX^N_{(m-1)\delta}(x) +
(1+\eps)\delta \lambda_{(m-1)\delta}N\cX^N_{(m-1)\delta}(x)
\end{align*}
and
\begin{align*}
N\cX^N_{m\delta}(x)
 \geq N\cX^N_{(m-1)\delta}(x)-
R_{6,x} \geq
N\cX^N_{(m-1)\delta}(x) -
2c_1\delta N\cX^N_{(m-1)\delta}(x).
\end{align*}
Hence,
\begin{align*}
1- 2c_1\delta \leq
\frac{N\cX^N_{m\delta}(x)}{N\cX^N_{(m-1)\delta}(x)}
\leq
1+(1+\eps)\delta \lambda_{(m-1)\delta}.
\end{align*}
If in addition $A_3$ holds then
\begin{align}\label{n20.2}
\frac{(2-5\eps)\delta \lambda_{(m-1)\delta}}
{1+(1+\eps)\delta \lambda_{(m-1)\delta}}
\leq 
\frac{R_{1,x}}{N\cX^N_{m\delta}(x)}
\leq 
\frac{(2+5\eps)\delta \lambda_{(m-1)\delta}}
{1- 2c_1\delta}.
\end{align}

Let $A_5 = A_2\cap A_3 \cap A_4$.
By \eqref{n13.6}, \eqref{n22.8} and \eqref{n20.1}, we have 
$\P(A_5^c) \leq 3 \delta^{5/2}$ for small $\delta$,
so for every $m$ there exists an event $B_m \in \F_{(m-1)\delta}$ such that
\begin{align}\label{n26.1}
\P(B_m) \geq 1 - 2 \delta^{5/4}
\end{align}
and on the event $B_m$,
\begin{align}\label{n22.7}
 \E\left( \bl_{ A_5^c}  \mid \F_{(m-1)\delta}\right) \leq 2 \delta^{5/4}.
\end{align}

Let $C_k^x$ be the intersection of  $\{H^k_{m\delta}(m\delta) = x\}$ and the event that there was exactly one branching event along $H^k_{m\delta}$ on the interval $[(m-1)\delta ,m\delta]$. Let $C_J$ be the  event that there was exactly one branching event along the spine on the interval $[(m-1)\delta ,m\delta]$.
Let $D^x_k = \{\cL(J^N,m\delta) = \cL(X^k_{m\delta}), J^N_{m\delta}=x\}$.
In the following calculation we use
the Markov property applied at time $m\delta$ and \eqref{n20.2},
\begin{align*}
\P(&C_J \mid \F_{(m-1)\delta}) 
= \sum_{x\in F}\sum_{k=1}^N \P(D^x_k \cap C_k^x \mid \F_{(m-1)\delta})\\
&= \E\left(\sum_{x\in F}\sum_{k=1}^N \P(D^x_k \cap C_k^x
\mid \F_{m\delta}) \biggm| \F_{(m-1)\delta}\right)\\
&= \E\left(\sum_{x\in F}\sum_{k=1}^N \bl_{C_k^x} \P(D^x_k 
\mid \F_{m\delta}) \biggm| \F_{(m-1)\delta}\right)\\
&= \E\left(\sum_{x\in F}\sum_{k=1}^N \bl_{C_k^x\cap A_5} \P(D^x_k 
\mid \F_{m\delta}) \biggm| \F_{(m-1)\delta}\right)\\
&\quad + \E\left(\sum_{x\in F}\sum_{k=1}^N \bl_{C_k^x\cap A_5^c} \P(D^x_k 
\mid \F_{m\delta}) \biggm| \F_{(m-1)\delta}\right)\\
&\leq  \E\left(\sum_{x\in F}\sum_{k=1}^N \bl_{C_k^x\cap A_5} \P(D^x_k 
\mid \F_{m\delta}) \biggm| \F_{(m-1)\delta}\right)
+ \E\left( \bl_{ A_5^c}  \mid \F_{(m-1)\delta}\right)\\
&\leq \E\left(\sum_{x\in F}\sum_{k=1}^N \bl_{C_k^x\cap A_5} 
\frac{\P(J^N_{m\delta} = x \mid \F_{m\delta})}{ N\cX^N_{m\delta}(x)} 
\biggm| \F_{(m-1)\delta}\right)
+ \E\left( \bl_{ A_5^c}  \mid \F_{(m-1)\delta}\right)\\
&= \E\left(\sum_{x\in F} \frac{\P(J^N_{m\delta} = x \mid 
\F_{m\delta})}{N\cX^N_{m\delta}(x)} \sum_{k=1}^N \bl_{C_k^x\cap A_5} 
\biggm| \F_{(m-1)\delta}\right)
+ \E\left( \bl_{ A_5^c}  \mid \F_{(m-1)\delta}\right)\\
&= \E\left(\sum_{x\in F} \frac{\P(J^N_{m\delta} = x \mid 
\F_{m\delta})}{N\cX^N_{m\delta}(x)} R_{1,x} \bl_{A_5} \biggm| 
\F_{(m-1)\delta}\right)
+ \E\left( \bl_{ A_5^c}  \mid \F_{(m-1)\delta}\right)\\
&\leq \E\left(\sum_{x\in F} \P(J^N_{m\delta} = x \mid \F_{m\delta})
\frac{(2+5\eps)\delta \lambda_{(m-1)\delta}}
{1- 2c_1\delta}
 \biggm| \F_{(m-1)\delta}\right)
+ \E\left( \bl_{ A_5^c}  \mid \F_{(m-1)\delta}\right)\\
&= \frac{(2+5\eps)\delta \lambda_{(m-1)\delta}}
{1- 2c_1\delta} \E\left(\sum_{x\in F} \P(J^N_{m\delta} = x \mid \F_{m\delta})
 \biggm| \F_{(m-1)\delta}\right)
+ \E\left( \bl_{ A_5^c}  \mid \F_{(m-1)\delta}\right)\\
&=\frac{(2+5\eps)\delta \lambda_{(m-1)\delta}}
{1- 2c_1\delta}
+ \E\left( \bl_{ A_5^c}  \mid \F_{(m-1)\delta}\right) .
\end{align*}
This, \eqref{n18.2} and \eqref{n22.7} imply that
for any $\eps>0$, some $\delta_1>0$, for every  $\delta\in (0,  \delta_1)$ there exists $N_1$ so large that for $N\geq N_1$ and $1 \leq m \leq m_1$, 
 on the event $B_m$, 
\begin{align}\nonumber
\P(C_J \mid \F_{(m-1)\delta}) 
&\leq
\frac{(2+5\eps)\delta \lambda_{(m-1)\delta}}
{1- 2c_1\delta}
+ \E\left( \bl_{ A_5^c}  \mid \F_{(m-1)\delta}\right)
\leq  \frac{(2+5\eps)\delta \lambda_{(m-1)\delta}}
{1- 2c_1\delta} + 2 \delta^{5/4}\\
&\leq (2+6\eps)\delta \lambda_{(m-1)\delta}.\label{n22.10}
\end{align}

Let $C'_J$ be the  event that there were at least two branching events along the spine on the interval $[(m-1)\delta ,m\delta]$. A calculation similar to that for $C_J$ but using \eqref{n22.2}-\eqref{n22.4} instead of \eqref{n22.8} shows that there exist events  $B'_m \in \F_{(m-1)\delta}$ such that
$\P(B'_m) \geq 1 - 2 \delta^{5/4}$ and for any $\eps>0$, some $\delta_1>0$, for every  $\delta\in (0,  \delta_1)$ there exists $N_1$ so large that for $N\geq N_1$ and $1 \leq m \leq m_1$, 
 on the event $B'_m$, 
\begin{align}\label{n22.11}
\P(C'_J \mid \F_{(m-1)\delta}) 
&\leq 3\eps\delta \lambda_{(m-1)\delta}.
\end{align}

We now derive the lower estimate for $\P(C_J \mid \F_{(m-1)\delta}) $,
\begin{align*}
\P(&C_J \mid \F_{(m-1)\delta}) 
= \sum_{x\in F}\sum_{k=1}^N \P(D^x_k \cap C_k^x \mid \F_{(m-1)\delta})\\
&= \E\left(\sum_{x\in F}\sum_{k=1}^N \bl_{C_k^x\cap A_5} \P(D^x_k 
\mid \F_{m\delta}) \biggm| \F_{(m-1)\delta}\right)\\
&\qquad + \E\left(\sum_{x\in F}\sum_{k=1}^N \bl_{C_k^x\cap A_5^c} \P(D^x_k 
\mid \F_{m\delta}) \biggm| \F_{(m-1)\delta}\right)\\
&\geq  \E\left(\sum_{x\in F}\sum_{k=1}^N \bl_{C_k^x\cap A_5} \P(D^x_k 
\mid \F_{m\delta}) \biggm| \F_{(m-1)\delta}\right)\\
&= \E\left(\sum_{x\in F}\sum_{k=1}^N \bl_{C_k^x\cap A_5} 
\frac{\P(J^N_{m\delta} = x \mid \F_{m\delta})}{ N\cX^N_{m\delta}(x)} 
\biggm| \F_{(m-1)\delta}\right)\\
&= \E\left(\sum_{x\in F} \frac{\P(J^N_{m\delta} = x \mid 
\F_{m\delta})}{N\cX^N_{m\delta}(x)} \sum_{k=1}^N \bl_{C_k^x\cap A_5} 
\biggm| \F_{(m-1)\delta}\right)\\
&= \E\left(\sum_{x\in F} \frac{\P(J^N_{m\delta} = x \mid 
\F_{m\delta})}{N\cX^N_{m\delta}(x)} R_{1,x} \bl_{A_5} \biggm| 
\F_{(m-1)\delta}\right)\\
&\geq \E\left(\sum_{x\in F} \P(J^N_{m\delta} = x \mid \F_{m\delta})
\frac{(2-5\eps)\delta \lambda_{(m-1)\delta}}
{1+(1+\eps)\delta \lambda_{(m-1)\delta}}  \bl_{A_5}
 \biggm| \F_{(m-1)\delta}\right)\\
&= \frac{(2-5\eps)\delta \lambda_{(m-1)\delta}}
{1+(1+\eps)\delta \lambda_{(m-1)\delta}} \E\left( \bl_{A_5}\sum_{x\in F} \P(J^N_{m\delta} = x \mid \F_{m\delta})
 \biggm| \F_{(m-1)\delta}\right)\\
&=\frac{(2-5\eps)\delta \lambda_{(m-1)\delta}}
{1+(1+\eps)\delta \lambda_{(m-1)\delta}} \E\left( \bl_{A_5}
 \mid \F_{(m-1)\delta}\right) .
\end{align*}
This, \eqref{n18.2} and \eqref{n22.7} imply that
for any $\eps>0$, some $\delta_1>0$, for every  $\delta\in (0,  \delta_1)$ there exists $N_1$ so large that for $N\geq N_1$ and $1 \leq m \leq m_1$, 
 on the event $B_m$, 
\begin{align}\label{n22.12}
\P(C_J \mid \F_{(m-1)\delta}) 
&\geq
\frac{(2-5\eps)\delta \lambda_{(m-1)\delta}}
{1+(1+\eps)\delta \lambda_{(m-1)\delta}} \E\left( \bl_{A_5}
 \mid \F_{(m-1)\delta}\right)\\
&\geq  \frac{(2-5\eps)\delta \lambda_{(m-1)\delta}}
{1+(1+\eps)\delta \lambda_{(m-1)\delta}}
(1- 2 \delta^{5/4})
\geq (2-6\eps)\delta \lambda_{(m-1)\delta}.\nonumber
\end{align}

Let $B_* = \bigcap_{1 \leq m \leq m_1} (B_m \cap B'_m)$. We have 
\begin{align*}
\P(B_*^c) \leq m_1 4 \delta^{5/4} =
(\lceil t_1 /\delta \rceil+1) 4 \delta^{5/4}
\leq c_5 \delta^{1/4}.
\end{align*}

Let $\wt M$ be a Poisson process with intensity $2\lambda_t$,
independent of $M^J$. We define $\wh M^J $ 
by setting $\wh M^J_t - \wh M^J_{(m-1)\delta} = M^J_t -  M^J_{(m-1)\delta}$
for $t\in [(m-1)\delta, m\delta]$ on the event $B'_m \cap B_m$ for $m\geq 1$.
We let $\wh M^J_t - \wh M^J_{(m-1)\delta} = \wt M_t - \wt M_{(m-1)\delta}$
for $t\in [(m-1)\delta, m\delta]$ on $(B'_m \cap B_m)^c$ for $m\geq 1$.
It follows from \eqref{n22.10}, \eqref{n22.11} and \eqref{n22.12} that
$\wh M^J$ satisfies \eqref{n8.1}-\eqref{n8.2}. Hence, the processes $\wh M^J$ converge to the Poisson process with  intensity $2\lambda_t$ as $N\to \infty$.
Since $\P(B_*^c)$ can be made arbitrarily small by choosing small $\delta$, we conclude that  the processes $ M^J$ converge to the Poisson process with  intensity $2\lambda_t$ as $N\to \infty$.
\end{proof}

The next theorem is concerned with the distribution of a side 
branch of the spine of Fleming--Viot process. To this aim we consider 
two branching processes. The first process, $\V$, will be the 
branching version of $Y$ with the deterministic branching rate 
$\lambda_t$ defined in \eqref{n6.1}. Then we will define a branching 
process $\Z$ representing descendants (along historical paths) of 
one  of the components of $\X$. The constructions are routine but 
tedious so we will only sketch them.

Fix any  probability measure $\cX$ on $F$.
 Given $x_1\in F$ and $t_1 \geq 0$, let $\{\wh Y_t, t \geq t_1\}$ have the distribution of the process $Y$ started from $x_1$ at time $t_1$. 
Let $\tau_F$ be the exit time of $\wh Y$ from $F$.
Let $U$ be an independent random variable with the distribution given by 
$\P(U> u) = \exp\left(-\int_{t_1} ^ u \lambda_t dt\right)$ for $u \geq t_1$. 
We let $\zeta = \tau_F \land U$. If $\tau_F < U$ then we let $\bfa = 0$. Otherwise $\bfa = 1$. Let $\cP(t_1, x_1)$ denote the distribution of $\left(\{\wh Y_t, t_1 \leq t < \zeta\}, \bfa\right)$.

Let $\cB$ be the family of sequences of the form $(i_1, i_2, \dots, i_k)$, where $i_1 = 0$ and each $i_j$ is either 0 or 1. 
If $\beta = (i_1, i_2, \dots, i_k)$ then we will write $\beta+ 0 = (i_1, i_2, \dots, i_k,0)$ and $\beta+ 1 = (i_1, i_2, \dots, i_k,1)$. We will say that $\beta+0$ and $\beta +1$ are offspring of $\beta$.

Fix any $x_1 \in F$ and $t_1 \geq 0$.
There exists a branching process $\V=\V^{t_1, x_1}$ starting from a single individual with the following properties.
Individuals $V^\beta$ in $\V$ are indexed by  $\beta \in \cB$. Every individual $V^\beta$ is a process $\{V^\beta_t, s_\beta \leq t < t_\beta\}$ for some $0\leq s_\beta < t_\beta < \infty$.
Let $\cB_\V$ denote the random set of all indices of all individuals in $\V$.
We always have $(0) \in \cB_\V$. 
We call $V^\gamma$ an offspring of $V^\beta$ if and only if $\gamma$ 
is an offspring of $ \beta $.
If $\beta \in \cB_\V$ then all ancestors of $\beta$ are also in $\cB_\V$.  
If $\beta \in \cB_\V$ has no offspring in $\cB_\V$ then we let $\bfa_\beta = 0$. Otherwise $\bfa_\beta = 1$. 
If $\gamma $ is an offspring of $\beta$ then $V^\gamma_{s_\gamma} = 
V^\beta_{t_\beta -}$. If $\beta = (0)$ then
the distribution of $\left(\{V^\beta_t, s_\beta \leq t < t_\beta\}, \bfa_\beta\right)$  is $\cP(t_1, x_1)$. For any other $\beta\in \cB_\V$,
the conditional distribution of $\left(\{V^\beta_t, s_\beta \leq t < t_\beta\}, \bfa_\beta\right)$ given the distribution of all ancestors of $V^\beta$ is that of $\cP(s_\beta, V^\beta_{s_\beta})$. Let $\calD(t_1, x_1)$ denote the distribution of $\V^{t_1, x_1}$.

\begin{remark}\label{n23.1}
We will now argue that the process $\V$ has a finite lifetime a.s.
Let $K(t)$ be the number of individuals at time $t$.
Suppose that the process $\V$ starts at time 0 and its starting distribution is
randomized so that the position of the unique individual 
at time 0 has distribution $\cX$. The branching intensity $\lambda_t$ has been
chosen so that the expected number of individuals is constant in time for this initial distribution, i.e.,
$\E K(t) = 1$ for all $t\geq 0$. This implies that $K(t)$ cannot grow to infinity (in finite or infinite time) with positive probability. It follows that, with probability 1, for some $c_1< \infty$, there will be arbitrarily large times $t_k$ with $K(t_k) \leq c_1$. A standard argument based on the strong Markov property shows that $\V$ has to become extinct within one unit of time of one of $t_k$'s (or earlier), a.s. Since this is an almost sure result, it is easy to see that it implies that $\V^{s, x}$ has a finite lifetime, a.s., for every $s\geq 0$ and $x\in F$.
\end{remark}

Suppose that $X^k_{t_1} = x_1$ and let $\alpha_0 = \chi(k, t_1, t_1)\in \cA$.
Let $\cA_\Z$ be the family of all descendants $\alpha $ of $\alpha_0$ in $ \cA$
such that $\alpha = \cL(X^j_t)$ for some $j\in\{1,\dotsc,N\}$ and 
$t\geq t_1$. It is elementary to see that there exists a one to one 
mapping  $\Gamma: 
\cA_\Z \to \cB$ with $\Gamma (\alpha_0) = (0)$, preserving 
parenthood, i.e., 
$\Gamma(\alpha_1) $ is a parent of $\Gamma(\alpha_2)$ if and only if 
$\alpha_1$ is a parent of $\alpha_2$. We choose such a mapping 
$\Gamma$ in an arbitrary way.
Let $\cB_\Z = \Gamma(\cA_\Z)$. We let $\Z^{k,t_1}(t)$ be a branching 
process with individuals $Z^\beta$ for $\beta \in \cB_\Z$.
We call $Z^\beta$ an offspring of $Z^\gamma$ if and only if $\beta$ is an offspring of $ \gamma $. 
Every individual $Z^\beta$ is a process $\{Z^\beta_t, s_\beta \leq t < t_\beta\}$ for some $0\leq s_\beta < t_\beta < \infty$. 
If $\beta = \Gamma(\alpha)$ and $\alpha = ((a_1,b_1), (a_2,b_2), \dots, (a_m,b_m))$ then $s_\beta = \inf\{t\geq 0:   \chi(a_m, t,t) = \alpha\}$,
$t_\beta = \sup\{t\geq 0:  \chi(a_m, t,t) = \alpha\}$
and $Z^\beta_t = X^{a_m}_t$ for $t\in [s_\beta, t_\beta)$.
 Note that $\Z^{k,t_1}$ has an infinite lifetime if and only if the spine $J^N$ is a part of this branching process.  

Consider any $k\geq 1$ and let $u_k$ be the time of the $k$-th branching point of the spine $J^N$. Suppose that $\chi(J, u_k) = j_1$ and note that
there is a unique $j_2 \in\{1,\dots, N\}$ such that $j_2\ne j_1$ and 
$\chi(j_2, u_k, t) = \chi(j_1, u_k, t)$ for all $t< u_k$. Let 
$\Z_k=\Z^{j_2,u_k}$.

\begin{theorem}\label{thm:main3}
Assume that $\cX$ is a probability measure on $F$ with $\cX(x) >0$ for all $x\in F$. Suppose that
$\cX^N_0\Rightarrow \cX$ as $N\to\infty$ and consider the process $Y^\infty$ with  the initial distribution  $\cX$.
Fix any $k\geq 1$ and let $\mu_k(dt)$ be the distribution of  the time 
of the $k$-th jump of a Poisson process with intensity $\lambda_t$, independent of $Y^\infty$. 
 Then the distribution of $\Z_k$ converges, as $N\to \infty$, to 
 $\int_0^\infty 
\sum_{x\in F}  \calD(t,x) \P(Y^\infty_t =x) \mu_k(dt)$.
\end{theorem}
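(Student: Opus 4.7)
My plan is to condition on the pair $(u_k, J^N_{u_k})$, identify its joint weak limit, then identify the conditional weak limit of $\Z_k$ given that pair, and assemble the mixture. By Theorem \ref{thm:main2} the spine branching process $M^J$ converges in law to a Poisson process of intensity $2\lambda_t$, so $u_k$ converges in law to the $k$-th jump time $T_k$ of that process, whose distribution I identify with $\mu_k$. By Theorem \ref{thm:main1}, $J^N\Rightarrow Y^\infty$. The spine's branching clock is driven by boundary-hitting events of the $N$ particles whose aggregate rate is approximately $2\lambda_t$ in the limit, independent of the spine's current position, so $T_k$ decouples from $Y^\infty$ and one obtains $(u_k, J^N_{u_k})\Rightarrow(T_k, Y^\infty_{T_k})$ with joint law $\mu_k(dt)\,\P(Y^\infty_t=x)$.

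Next I condition on $(u_k, J^N_{u_k})=(t,x)$. By the strong Markov property at $u_k$ and Remark \ref{n13.1}, the post-$u_k$ Fleming--Viot system has initial empirical distribution close to the law of $Y^t_t$, up to an $O(1/N)$ perturbation from pinning the single particle $j_2$ at $x$. Each label $\alpha\in\cA_\Z$ is carried by exactly one particle $X^j$ on a maximal time interval $[s_\beta,t_\beta)$ on which $X^j$ evolves as $Y$, and the interval ends at a branching event in which $X^j$ is either (a) the exiter ($j=i_k$), so $\alpha$ has no offspring in $\cA_\Z$, or (b) the target ($j=U^{i_k}_k$), so $\alpha$ has two offspring in $\cA_\Z$. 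Event (a) occurs at the position-dependent exit rate $q_{X^j_t,0}$, and event (b) at the position-independent asymptotic rate $\lambda_t$ by Corollary \ref{n25.1}. This is precisely the one-step law $\cP(s_\beta, Z^\beta_{s_\beta})$ used in the construction of each individual of $\V^{t,x}$.

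To promote single-generation agreement to convergence of the whole tree, I invoke Remark \ref{n23.1}: $\V^{t,x}$ has finite lifetime and finite total size almost surely, so for any $\eps>0$ there exist a horizon $T$ and a generation cap $G$ within which $\V^{t,x}$ lives with probability $\geq 1-\eps$. On the truncated tree, an induction over generations alternating Corollary \ref{n25.1} with the strong Markov property at successive branching times gives convergence of the joint law of the first $G$ generations of $\Z_k$ up to time $u_k+T$ to the corresponding truncation of $\V^{t,x}$. Integrating against the limit law of $(u_k, J^N_{u_k})$ then yields the claimed mixture.

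The main obstacle is the second stage: controlling the two finite-$N$ corrections that distinguish $\Z_k$ from $\V^{t,x}$. First, a $\Z_k$-particle that exits $F$ jumps onto a uniformly chosen partner which may itself lie in $\Z_k$ (an ``internal collision''), producing in the label tree an extra child of $\alpha$ in $\cA_\Z$ absent from $\V^{t,x}$; per branching event this has probability $O(|\cA_\Z|/N)=O(G/N)\to 0$ under the truncation. Second, the rate at which a $\Z_k$-particle is jumped onto depends on the global boundary-hitting rate of the remaining $N-1$ particles, which must be shown uniformly close to $\lambda_t$ over time intervals of length $T$. Both are estimates of the same kind as the $R_1$--$R_4$ bookkeeping in the proof of Theorem \ref{thm:main2}, iterated over the bounded number of generations allowed by the truncation, and they drive the approximation errors to zero as $N\to\infty$.
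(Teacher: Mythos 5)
Your proposal takes essentially the same route as the paper's proof: fix the spine branching time and location, establish the one-step transition law for each individual of the side branch from Proposition~\ref{n2.1} and Corollary~\ref{n25.1} together with the Markov property, truncate to finitely many generations using the a.s.\ finiteness argument of Remark~\ref{n23.1}, and assemble the mixture over the branching time and spine position. The paper packages the conditioning through the $\delta$-interval events $C_J$ from the proof of Theorem~\ref{thm:main2} and an explicit remark that the estimate established for a fixed index $\ell$ transfers to the special index $j_2$, whereas you condition directly on $(u_k, J^N_{u_k})$; that is a difference in bookkeeping rather than substance, and your explicit identification of the mixing measure fills in the ``this easily implies the theorem'' step.

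One item worth flagging. You derive, correctly from Theorem~\ref{thm:main2}, that $u_k$ converges in law to the $k$-th jump time of a Poisson process of intensity $2\lambda_t$, and then state that you ``identify'' this law ``with $\mu_k$.'' But the statement defines $\mu_k$ as the law of the $k$-th jump of a Poisson process of intensity $\lambda_t$, a different measure. Your derivation is right; the identification as written is not, and the discrepancy appears to trace to the statement itself rather than to your argument (the mixing intensity should be the spine branching intensity $2\lambda_t$, not the side-branch intensity $\lambda_t$). A smaller remark: an ``internal collision'' does not create an extra child in the label tree --- when $j$ in $\Z_k$ exits onto $m$ in $\Z_k$, $j$'s label dies and $m$'s label branches in the same instant --- so the finite-$N$ defect is a spurious coupling of a death and a birth rather than an extra node. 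Your $O(G/N)$ bound on the probability of any such coincidence is nevertheless the correct estimate, and it corresponds to the role the bound on $\P(\chi(J,m\delta)=\ell)$ plays in the paper's version.
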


\begin{proof}

We will use notation and definitions from the proof of Theorem \ref{thm:main2}.
Let
\begin{align*}
A'_1=
\left\{ \sup_{x\in F} \frac{| \cX^N_{m\delta}(x) - \P(Y^{m\delta}_{m\delta} =x)| }{\P(Y^{m\delta}_{m\delta} =x)} \leq \eps\right\},
\end{align*}
and note that this is almost the same as the event $A_1$ in 
\eqref{n14.1} except that $m-1$ is replaced with $m$. 
Recall that $m_1$ is defined to be a function of $\delta$ in the proof of Theorem \ref{thm:main2}, and $\delta$ is specified later in that proof.

Recall that $C_J$ is the  event that there was exactly one branching event along the spine on the interval $[(m-1)\delta ,m\delta]$.
We obtain from \eqref{n26.1}, \eqref{n22.10} and \eqref{n22.12}  that
for any $\eps>0$, some $\delta_1>0$, for every  $\delta\in (0,  \delta_1)$ there exists $N_1$ so large that for $N\geq N_1$ and $1 \leq m \leq m_1$, 
\begin{align}\label{n26.2}
\P(C_J ) 
&\geq (2-6\eps)\delta \lambda_{(m-1)\delta} (1 - 2 \delta^{5/4}),\\
\P(C_J ) 
&\leq (2+6\eps)\delta \lambda_{(m-1)\delta} + 2 \delta^{5/4}.
\label{n26.3}
\end{align}
This, \eqref{n18.2} and \eqref{n14.1} imply that 
for any $\eps>0$, some $\delta_1>0$, for every  $\delta\in (0,  \delta_1)$ there exists $N_1$ so large that for $N\geq N_1$ and $1 \leq m \leq m_1$, 
\begin{align}\label{n27.1}
\P(A'_1 \mid C_J) 
& = \frac{\P(A'_1 \cap C_J)}{\P(C_J)}
\geq \frac{\P( C_J) - \P((A'_1)^c )}{\P( C_J) } \\
&\geq
\frac{(2-6\eps)\delta \lambda_{(m-1)\delta} (1 - 2 \delta^{5/4}) - \delta^4}
{(2+6\eps)\delta \lambda_{(m-1)\delta} + 2 \delta^{5/4} }
\geq 1- 7\eps.\nonumber
\end{align}

Fix some $s_1\geq 0$ and $1\leq \ell \leq N$. 
Consider the process $\Z^{\ell,s_1}$ conditioned on $\{X^\ell_{s_1}= Z^{(0)}_{s_1} =x_1\}$. It follows from Proposition \ref{n2.1} that the distribution of the first individual in this process, i.e., $\{Z^{(0)}_t, t\geq s_1\}$ converges, as $N\to \infty$, to the distribution of the process $Y$ killed at the first jump of an independent Poisson process with intensity $\lambda_t$, for $t\geq s_1$. In other words, it converges to the distribution of the first individual in the the process $\V^{s_1, x_1}$. 

Next consider any family of sequences $\{\beta_1 =(0), \beta_2,  \dots, \beta_m\}\subset \cB$ such that if $\beta_j$ belongs to the family then the parent belongs to the family as well. By the Markov property applied at the death times of individuals, for any such family, for every $\beta_j$, Proposition \ref{n2.1} implies that the distribution of the individual labeled $\beta_j$ in $\Z^{\ell,s_1}$ converges to the distribution of the individual with the same label in 
$\V^{s_1, x_1}$. Moreover, we have  convergence of the joint distribution of all individuals labeled $\beta_1, \beta_2,  \dots, \beta_m$ in the process $\Z^{\ell,s_1}$  to the joint distribution of the similarly labeled individuals in 
$\V^{s_1, x_1}$. By Remark \ref{n23.1}, the process $\V^{s_1, x_1}$ has a finite lifetime so $\cB_\V$ is finite, a.s. This completes the proof  that the distribution of $\Z^{\ell,s_1}$ converges to that of $\V^{s_1, x_1}$.

By Theorem \ref{thm:main1} and \eqref{n14.1},
for any $\eps>0$, some $\delta_1>0$, for every 
$\delta\in(0,\delta_1)$, there exists $N_1$ so large that for all  
$N\geq N_1$, $1\leq \ell \leq N$ and $1 \leq m \leq m_1$, 
\begin{align}\label{n27.2}
\P(\chi(J,m\delta) = \ell) < \eps.
\end{align}

We obtain from the above argument concerning the distribution of $\Z^{\ell,s_1}$  combined with the Markov property applied at $m\delta$, Corollary \ref{n25.1}, \eqref{n27.1} and \eqref{n27.2}  that
for any $\eps>0$, some $\delta_1>0$, for every  $\delta\in (0,  \delta_1)$ there exists $N_1$ so large that for all $x\in F$, $N\geq N_1$, $1\leq \ell \leq N$ and $1 \leq m \leq m_1$, conditional
on $C_J$  and  the event $\{Z^\ell_{m\delta} =x, \chi(J,m\delta) \ne \ell\}$, the Prokhorov distance between the distribution of $\Z^{\ell,m\delta}$ and that of $\V^{m\delta, x}$ is less than $\eps$.

Since the definition of $j_2$ (an element of the definition of 
$\Z_k$) does not refer to the post-$u_k$ process, the claim made in 
the last paragraph applies not only to a fixed $\ell$ but also to 
$j_2$. Hence, 
for any $\eps>0$, some $\delta_1>0$, for every  $\delta\in (0,  \delta_1)$, there exists $N_1$ so large that for all $N\geq N_1$ and $1 \leq m \leq m_1$, conditional
on $C_J$, the Prokhorov distance between the distribution of $\Z_k$ 
and that of $\V^{m\delta, x}$ is less than $\eps$.
This easily implies the theorem.
\end{proof}

\begin{remark}
It is not hard to see that the following ``propagation of chaos'' assertion holds: 
for any fixed $k$, the  processes $\Z_{1}, 
\Z_{2}, \dots, \Z_k$ are asymptotically independent, when 
$N\to\infty$.
\end{remark}

\section{Spine distribution for a fixed $N$}\label{fixedN}

The main result of this paper states that if the number $N$ of
particles of a Fleming-Viot process increases to infinity then
the distribution of the spine converges to the distribution of the
underlying Markov process conditioned not to hit the boundary. 
One could wonder whether 
the theorem must have the asymptotic character; perhaps the claim
is true for every fixed $N$.
This section is devoted to an
example of a Fleming-Viot process with $N=2$ particles such that the distribution of the spine is not
the same as the distribution of the driving 
process conditioned on non-extinction.

Let $Y_t$ be a continuous--time Markov process $Y_t$ with
the state space $E=\set{0,1,2}$, $F=\set{1,2}$ and the transition rate matrix 
\begin{equation*}
  A=
  \begin{bmatrix}
    0 & 0 & 0 \\
    4 & -6 & 2 \\
    1 & 6 & -7
  \end{bmatrix}.
\end{equation*}
Let $\X_t=(X_t^1,X_t^2)$ denote the 2-particle Fleming-Viot process based on $Y$. Then $\X$ has the state space 
$\set{(1,1),(1,2),(2,1),(2,2)}$ and the transition rate matrix
\begin{equation*}
  \A=
  \begin{bmatrix}
    -4 & 2 & 2 & 0 \\
    7 & -13 & 0 & 6 \\
    7 & 0 & -13 & 6 \\
    0 & 6 & 6 & -12
  \end{bmatrix}.
\end{equation*}
Therefore the stationary distribution $\pi$ of $\X$ determined by
$\pi \A=0$ is 
\begin{equation}\label{o27.1}
  \pi=\left( \frac{7}{13},  \frac{2}{13}, \frac{2}{13}, \frac{2}{13}
  \right).
\end{equation}

Let  $Y'$ and $Y''$ be independent copies of
$Y$. The state space of $(Y', Y'')$ is
\begin{equation*}
  \Lambda=\set{(0,0),(0,1),(0,2), (1,0),(2,0),(1,1),(1,2),(2,1), (2,2)}
\end{equation*}
and the transition rate matrix for $(Y', Y'')$ is
\begin{equation*}
  \B=
  \begin{bmatrix}
    0&&&&\cdots&&&&0\\
    \vdots&&&&&&&&\vdots\\
    0&&&&\cdots&&&&0\\
    0&4&0&4&0&-12&2&2&0\\
    0&0&4&1&0&6&-13&0&2\\
    0&1&0&0&4&6&0&-13&2&\\
    0&0&1&0&1&0&6&6&-14
  \end{bmatrix}.
\end{equation*}
Let
\begin{equation*}
  f(x,y)=\P\left( \text{$Y''$ reaches 0 before $Y'$} \mid
  Y'_0=x,Y''_0=y \right),\quad x,y=1,2.
\end{equation*}
Then $f$ is harmonic with respect to $\B$, i.e., for all $(x_1,x_2)\in\Lambda$,
\begin{equation*}
  \sum_{(y_1,y_2)\in\Lambda} \B((x_1,x_2),(y_1,y_2))\left( f(y_1,y_2)-f(x_1,x_2) \right) =0.
\end{equation*}
It follows from the definition of $f$ that $f(0,1)=f(0,2)=0$ and $f(1,0)=f(2,0)=1$.
By symmetry, $f(1,1)=\frac{1}{2}$ and $ f(2,2)=\frac{1}{2}$.
It is elementary to check that
\begin{equation*}
   f(1,2)=\frac{5}{13},\qquad
  f(2,1)=\frac{8}{13}.
\end{equation*}

Let $J_t$ denote the spine of $\X_t$. The spine passes through $X^1_t$ (i.e., $\chi(J, t) =1$)
if an only if $X^2$ ``jumps to 0'' before $X^1$, after time $t$. The probability of this event is the same as the probability that $Y''$ will hit 0 before $Y'$, assuming that $(Y'_0, Y''_0) = (X^1_t, X^2_t)$.
If $\X_t=(1,1)$ then $J_t=1$, and
if $\X_t=(2,2)$ then $J_t=2$. 
If $\X_t=(1,2)$ then $J_t=1$ with probability
$f(1,2)=\frac{5}{13}$, and, by symmetry,
if $\X_t=(2,1)$ then $J_t=1$ with probability
$\frac{5}{13}$. 
Assume that $\X$ is in the stationary regime and
recall the stationary probabilities for $\X$ given in \eqref{o27.1} to see that
\begin{equation}\label{n30.1}
  \P(J_t=1)=\frac{7}{13}\cdot 1+\frac{2}{13}\cdot\frac{5}{13}
  +\frac{2}{13}\cdot\frac{5}{13}+\frac{2}{13}\cdot
  0=\frac{111}{169}.
\end{equation}

We will show that for a (generic) fixed $t>0$, the distribution of $J_t$ is not the same as the distribution of $Y$ conditioned to stay in $F$ until time $t$, and it is not the same as the distribution of $Y$ conditioned to stay in $F$ forever.

Let $\P_\mu$ denote the distribution of $Y$ with the initial
    distribution $\mu$ and assume that $\mu(0)=0$. 
    The transition probabilities of $Y$ are given by
    \begin{equation*}
      P_t^Y :=\e^{tA}=
     \begin{bmatrix}
      1 & 0 & 0 \\
      1-\frac{6}{7}\e^{-3t}-\frac{1}{7}\e^{-10t} & \frac{4}{7}
      \e^{-3t}+\frac{3}{7}\e^{-10t}  &
      \frac{2}{7} \e^{-3t}-\frac{2}{7}\e^{-10t}  \\
      1-\frac{9}{7}\e^{-3t}+\frac{2}{7}\e^{-10t} & \frac{6}{7}
      \e^{-3t}-\frac{6}{7}\e^{-10t}  &
      \frac{3}{7} \e^{-3t}+\frac{4}{7}\e^{-10t}  \\
     \end{bmatrix},
    \end{equation*}
    so
    \begin{align}\label{o27.2}
      \P_{\mu}(Y_t\neq 0)&=
      \left(\frac{6}{7}\mu(1)+\frac{9}{7}\mu(2)\right)\e^{-3t}+
         \left(\frac{1}{7}\mu(1)-\frac{2}{7}\mu(2)\right)\e^{-10t},\\
      \P_{\mu}(Y_t=1)&=
      \left(\frac{4}{7}\mu(1)+\frac{6}{7}\mu(2)\right)\e^{-3t}+
         \left(\frac{3}{7}\mu(1)-\frac{6}{7}\mu(2)\right)\e^{-10t},
\label{o27.3}\\
      \P_{\mu}(Y_t=2)&=
      \left(\frac{2}{7}\mu(1)+\frac{3}{7}\mu(2)\right)\e^{-3t}+
         \left(\frac{4}{7}\mu(2)-\frac{2}{7}\mu(1)\right)\e^{-10t}. \label{o27.4}
    \end{align}
 It follows that
\begin{align*}
\P_\mu\left( Y_t=1 \mid Y_t\neq 0
      \right)=
      \frac{\P_\mu(Y_t=1)}{\P_\mu(Y_t\neq 0)}
=\frac{2\left( 2\mu(1)+3\mu(2) \right)
	+\left( 3\mu(1)-6\mu(2) \right)\e^{-7t}}
	{3\left( 2\mu(1)+3\mu(2) \right)
	  +\left( \mu(1)-2\mu(2)\right)\e^{-7t}}\to\frac{2}{3}
\end{align*}
    as $t\to\infty$, regardless of the initial distribution $\mu$.
Comparing this value to \eqref{n30.1}, we see that for large $t$ the distribution of $J_t$ (in the stationary regime)
    is not the same
    as the law of $Y_t$ conditioned to stay in $F$ until $t$.

   Next we will compare the distribution of $J_t$ with the
    distribution of the process $Y$ conditioned to stay in $F$ 
forever, i.e., $Y_t^{\infty}$. Since $0$ is an absorbing state,
    \begin{equation*}
      \P_\mu( Y_t^{\infty}=x
      )=\lim_{s\to\infty}\P_\mu(Y_t=x\mid Y_{t+s}\neq 0),\quad
      x=1,2, \ t >0.
    \end{equation*}
By the Markov property,
    \begin{equation}
      \begin{split}
	\P_\mu\left( Y_t=1\mid Y_{t+s}\neq 0 \right)&=
	\frac{\P_\mu(Y_t=1)(1-P_s^{Y}(1,0))}
	{\P_\mu(Y_t=1)(1-P_s^{Y}(1,0))+\P_\mu(Y_t=2)(1-P_s^{Y}(2,0))}\\
	&=\frac{\P_\mu(Y_t=1)}
	{\P_\mu(Y_t=1)+\P_\mu(Y_t=2)
 {\displaystyle \frac{1-P_s^{Y}(2,0)}{1-P_s^{Y}(1,0)}}}.
      \end{split} 
    \end{equation}
 By \eqref{o27.2},
    \begin{equation*}
      \lim_{s\to\infty}\frac{1-P_s^{Y}(2,0)}{1-P_s^{Y}(1,0)}=
      \lim_{s\to\infty}\frac{9-2\e^{-7s}}{6+\e^{-7s}}=\frac{3}{2},
    \end{equation*}
    so this and \eqref{o27.3}-\eqref{o27.4} imply that
    \begin{equation*}
      \P_\mu( Y_t^{\infty}=1
      )=\frac{4}{7}+\frac{6}{7}\cdot\frac{\mu(1)-2\mu(2)}{2\mu(1)+3\mu(2)}\e^{-7t}.
    \end{equation*}
This probability converges to $4/7$ when $t\to \infty$. 
This value is different from that in \eqref{n30.1} so 
for large $t$ the distribution of $J_t$ (in the stationary regime)
    is not the same
    as the law of $Y_t^\infty$.

  \section{Acknowledgments}

We are grateful to Theodore Cox, Simon Harris, Doug Rizzolo
and Anton Wakolbinger for the most helpful advice.

\nocite{*}
\bibliographystyle{abbrv}
\bibliography{FVspine}

\end{document}